\documentclass{tac}



\usepackage{xy}
\usepackage[centertags]{amsmath}

\xyoption{all}

\input diagxy


\usepackage[colorlinks=true]{hyperref}
\hypersetup{allcolors=[rgb]{0.1,0.1,0.4}}


\author{M. Golshani and A. Shiralinasab Langari}


\thanks{%
 The first author's research has been supported by a grant from
  IPM (No. 1400030417). The
	second author's research  is partially supported by IPM.}

\address{School of Mathematics, Institute for Research in Fundamental Sciences (IPM)\\
 Tehran-Iran P.O. Box:
19395-5746\\[5pt]
 Department of Mathematics, Shahid Bahonar University of Kerman\\Kerman, Iran\\
}

\title[Language of a topos via spans]{Representing the language of a topos as a quotient of the category of spans}


\copyrightyear{2025}


 \keywords{Language of topos, Span category, Allegory, Topos, Boolean Topos}
\amsclass{18A32, 18B99, 18C10, 03G30}

\eaddress{golshani.m@gmail.com\CR shiralinasab@gmail.com}



\newtheorem{theorem}{Theorem}

\newtheoremrm{rem}{Remark}


\mathrmdef{Hom}
\mathbfdef{Set}

\def\mc#1{\mathcal {#1}}
\def\C{\mc C}
\def\A{\mc A}
\def\B{\mc B}
\def\D{\mc D}

\def\I{\mc I}

\def\X{\mc X}
\def\Y{\mc Y}
\def\T{\mc T}
\def\Z{\mc Z}
\def\E{\mc E}
\def\M{\mc M}

\def\W{\mc W}

\begin{document}

\maketitle
\begin{abstract}
We use quotients of span categories to introduce the language of a topos. We also introduce the notion of logical relation and study the quotients of span categories derived from them. As an application we show that the category of Boolean toposes is a reflective subcategory of the category of toposes, when the morphisms are logical functors.
 \end{abstract}

\section{Introduction}

The Mitchell-B\'enabou language \cite{macmor} is a well-known form of the internal language of an elementary topos. In this approach, types are interpreted as objects of the topos, and variables are interpreted as identity morphisms \( 1:A \rightarrow A \). More generally, terms of type \( A \) in variables \( x_i \) of types \( X_i \) are interpreted as morphisms from the product \( \prod X_i \to A \). Formulas of the language are therefore identified with morphisms into the subobject classifier \( \Omega \).

A different but related approach is introduced in \cite{lambscot}, where variables are treated as \emph{indeterminate morphisms}. Given an object \( A \) in a topos \( \mathcal{T} \), a new category \( \mathcal{T}[x] \) is constructed by freely adjoining a morphism \( x:1 \to A \) to \( \mathcal{T} \). This is achieved by forming the free category generated by the graph obtained from the underlying graph of \( \mathcal{T} \) by adjoining such a morphism and closing under finite limits. Equivalently, this can be described as the Kleisli category of a cotriple \( (S_A, \epsilon_A, \delta_A) \), where \( S_A(X) = A \times X \), \( \epsilon_A(X) = \pi_X \), and \( \delta_A(X) = \langle \pi_A, 1_{A \times X} \rangle \).

However, this construction deals with one indeterminate at a time, and lacks a unified environment for reasoning with multiple variables. In this paper, we extend this framework by constructing a category where \emph{all indeterminate morphisms} are adjoined simultaneously. Our construction uses categories of spans and their quotients to provide such a setting.

\begin{itemize}
    \item We define, for each object \( A \) in a cartesian category \( \mathcal C \), a stable system \( \A \) and form a quotient category \( \mathsf{Span}_\A(\mathcal C, \A) \), in which a canonical morphism \( x = [!_A, 1_A]_\A:1 \to A \) plays the role of the indeterminate morphism.
    \item We present a quotient category of spans
\[
\mathsf{Span}_\Pi(\mathcal C,\Pi),
\]
which universally incorporates \emph{all} indeterminate morphisms.  Moreover, if \(\mathcal C\) is cartesian closed, then \(\mathsf{Span}_\Pi(\mathcal C,\Pi)\) is cartesian closed as well.
\end{itemize}

The category \( \mathsf{Span}_\Pi(\C, \Pi) \) provides a canonical setting for interpreting terms, formulas, and logical connectives in an internal manner. In this paper, we develop a formulation of the internal language of a topos \( \T \) within the structured environment of \( \mathsf{Span}_\Pi(\T, \Pi) \), where all variables are introduced simultaneously. This unified framework enables a coherent representation of the internal language in which variables and logical constructs coexist as morphisms of a single category.

This paper also investigates conditions under which a quotient category of spans \( \mathsf{Span}_\sim(\T) \) forms a power allegory, ensuring that \( \mathsf{Map}(\mathsf{Span}_\sim(\T)) \) is a topos. Leveraging this framework, we construct, in a universal manner, a Boolean topos associated to each elementary topos. As a consequence, we show that the category of Boolean toposes forms a reflective subcategory of the category of toposes, when morphisms are taken to be logical functors.

\section{Preliminaries}

We recall some definitions and preliminaries about \textit{span categories}. For more details, see \cite{hoshitho} and \cite{hsty}.

We consider \emph{categories equipped with a stable system of morphisms}; that is, pairs $(\mathcal{C}, \mathcal{S})$ where $\mathcal{C}$ is a category and $\mathcal{S}$ is a collection of morphisms in $\mathcal{C}$ satisfying the following properties:
\begin{itemize}
	\item $\mathcal{S}$ contains all isomorphisms in $\mathcal{C}$ and is closed under composition;
	\item pullbacks of $\mathcal{S}$-morphisms along arbitrary morphisms exist in $\mathcal{C}$ and belong to $\mathcal{S}$.
\end{itemize}

For objects $A, B$ in $\mathcal{C}$, a \emph{span} $(s, f)$ with domain $A$ and codomain $B$ consists of a diagram
\[
\xymatrix{
A & D \ar[l]_s \ar[r]^f & B
}
\]
where $s \in \mathcal{S}$ and $f$ is a morphism in $\mathcal{C}$.

Given another stable system $\mathcal{F}$, we define a morphism $x : (s, f) \to (s', f')$ with $x \in \mathcal{F}$ if the following diagram commutes:
\[
\xymatrix{
& & D \ar[lld]_s \ar[dd]_x \ar[rrd]^f & & \\
A & & & & B \\
& & D' \ar[llu]^{s'} \ar[rru]_{f'} & &
}
\]

If such a morphism $x$ exists, we write $(s, f) \leq_{\mathcal{F}} (s', f')$. The equivalence relation generated by $\leq_{\mathcal{F}}$ is denoted by $\sim_{\mathcal{F}}$.

We define the quotient category of spans $\mathsf{Span}_{\mathcal{F}}(\mathcal{C}, \mathcal{S})$, where:
\begin{itemize}
    \item Objects are the same as those of $\mathcal{C}$;
    \item Morphisms are equivalence classes $[s, f]_{\sim_{\mathcal{F}}}$ of spans under $\sim_{\mathcal{F}}$.
\end{itemize}

Composition of morphisms $[s, f]_{\sim_{\mathcal{F}}} : A \to B$ and $[t, g]_{\sim_{\mathcal{F}}} : B \to C$ is defined as $[s t', g f']_{\sim_{\mathcal{F}}}$, as in the following diagram:
\[
\xymatrix{
&& P \ar[ld]_{t'} \ar[rd]^{f'} && \\
& D \ar[ld]_s \ar[rd]^f \ar@{}[rr]|{\text{pb}} && E \ar[ld]_t \ar[rd]^g & \\
A && B && C
}
\]
This composition is well-defined. For simplicity, we write $[s, f]_{\mathcal{F}}$ instead of $[s, f]_{\sim_{\mathcal{F}}}$.

In the case where $\mathcal{F} = \mathcal{I}$ is the class of isomorphisms, the category $\mathsf{Span}_{\mathcal{I}}(\mathcal{C}, \mathcal{S})$ is the ordinary category of spans. In this case, we simply write $[s, f]$ for morphisms.

We now state a useful lemma about the equivalence relation $\sim_{\mathcal{F}}$:
\begin{lemma}[\cite{hsty}]\label{describe for stab class rel}
Let $\mathcal{F}$ be a stable system. Then $(s, f) \sim_{\mathcal{F}} (s', f')$ if and only if there exist $p, q \in \mathcal{F}$ such that the following diagram commutes:
\[
\xymatrix{
&& D \ar[lld]_s \ar[rrd]^f && \\
A && P \ar[d]_q \ar[u]^p && B \\
&& D' \ar[ull]^{s'} \ar[urr]_{f'} &&
}
\]
\end{lemma}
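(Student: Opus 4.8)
The plan is to prove the two implications separately. Write $\approx$ for the relation on spans from $A$ to $B$ given by the right-hand side: $(s,f)\approx(s',f')$ iff there are an object $P$ and morphisms $p\colon P\to D$, $q\colon P\to D'$ in $\F$ with $sp=s'q$ and $fp=f'q$. The direction $(\Leftarrow)$ is routine; for $(\Rightarrow)$ I will show that $\approx$ is itself an equivalence relation containing $\leq_\F$, so that, $\sim_\F$ being by definition the \emph{smallest} equivalence relation containing $\leq_\F$, we get $\sim_\F\subseteq\approx$, which is exactly the forward implication. (Equivalently one could induct on the length of the connecting zig-zag, but the minimality argument packages that induction more cleanly.)

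For $(\Leftarrow)$, given such $P,p,q$, consider the span $A\xleftarrow{\,sp\,}P\xrightarrow{\,fp\,}B$. By the assumed equalities, $p$ and $q$ are $\F$-morphisms of spans $(sp,fp)\to(s,f)$ and $(sp,fp)\to(s',f')$, so $(sp,fp)\leq_\F(s,f)$ and $(sp,fp)\leq_\F(s',f')$, whence $(s,f)\sim_\F(s',f')$.

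For the forward implication, first note that $\approx$ contains $\leq_\F$: if $x\colon(s,f)\to(s',f')$ is an $\F$-morphism, take $P=D$, $p=1_D$, $q=x$ (legitimate since $\F$ contains all isomorphisms). Reflexivity ($p=q=1_D$) and symmetry (interchange the roles of $(p,D)$ and $(q,D')$) are immediate. The substantive point is transitivity. Suppose $(s,f)\approx(s',f')$ via $(P,p,q)$ and $(s',f')\approx(s'',f'')$ via $(P',p',q')$, where $q\colon P\to D'$ and $p'\colon P'\to D'$ share the codomain $D'$, the apex of $(s',f')$. Form the pullback $Q=P\times_{D'}P'$ of $q$ along $p'$: since $\F$ is a stable class this pullback exists, and its projections $\pi\colon Q\to P$ and $\pi'\colon Q\to P'$ both lie in $\F$, being pullbacks of $p'\in\F$ and of $q\in\F$ respectively. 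Then $p\pi\colon Q\to D$ and $q'\pi'\colon Q\to D''$ are in $\F$ by closure under composition, and chaining $sp=s'q$ with $s'p'=s''q'$ (and likewise the two equalities involving $f$) yields $s(p\pi)=s''(q'\pi')$ and $f(p\pi)=f''(q'\pi')$. Hence $(s,f)\approx(s'',f'')$.

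The main obstacle is precisely this transitivity step: one has to collapse a zig-zag of $\F$-morphisms of spans into a single ``down-then-up'' span, and the only available device is to pull the two inner legs back over one another. This is exactly where the hypothesis that $\F$ is a \emph{stable} class — closed under composition and under pullback along arbitrary morphisms — is used in an essential way; for a class merely closed under composition the statement would fail. Everything else is diagram bookkeeping.
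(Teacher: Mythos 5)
Your proof is correct, and in fact the paper states this lemma without any proof at all, so there is nothing to compare against: your argument (show the ``down-then-up'' relation $\approx$ is an equivalence relation containing $\leq_\F$, with transitivity handled by pulling back the two inner legs over the shared apex $D'$, and note that each instance of $\approx$ gives two instances of $\leq_\F$) is exactly the standard way to establish it, and your identification of stability under pullback as the essential hypothesis is on target. The only point worth flagging is in your $(\Leftarrow)$ direction: for $(sp,fp)$ to be a legitimate $\S$-span, and hence an admissible intermediate object in the zig-zag defining $\sim_\F$, you need $sp\in\S$, which requires implicitly that $\F\subseteq\S$ (or at least that $\S$ is closed under precomposition with $\F$-morphisms). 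This is harmless here, since everywhere the paper invokes the lemma one has $\F=\S$, but it is a hypothesis the bare statement ``for a stable class $\F$'' does not make explicit and you should record it.
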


To further generalize the relation $\sim_{\mathcal{F}}$, we introduce the notion of a \emph{compatible relation} on $\mathsf{Span}(\mathcal{C}, \mathcal{S})$, which is a relation on spans satisfying:
\begin{itemize}
	\item only spans with the same domain and codomain may be related;
	\item vertically isomorphic spans are related;
	\item the equivalence relation defines a congruence on the category, that is, horizontal composition from either side preserves the relation.
\end{itemize}

For such a compatible equivalence relation $\sim$, we write the equivalence class of a span $(s, f)$ as $[s, f]_{\sim}$, or simply $[s, f]$ when the context makes it clear which relation is meant. The corresponding quotient category is denoted by
\[
\mathsf{Span}_{\sim}(\mathcal{C}, \mathcal{S}).
\]

\section{Adding indeterminate arrows}

Throughout this section, let $\mathcal{C}$ be a cartesian category.
As in \cite{lambscot}, for an object $A \in \mathcal{C}$, we aim to add an indeterminate morphism $x: 1 \to A$ to $\mathcal{C}$ in a universal way. To achieve this, we define a stable system $\mathcal{A}$ associated with the object $A$ and consider a quotient category of $\mathcal{A}$-spans as a setting where $x:1 \to A$ naturally lives.
For an object $A$ in $\mathcal{C}$, define the following class:
\[
\mathcal{A} = \{ \pi: A^n \times B \to B \mid \pi \text{ is a projection} \}.
\]

\begin{lemma}
For every object $A \in \mathcal{C}$, the class
\[
\mathcal{A} = \{ \pi: A^n \times B \to B \mid \pi \text{ is a projection} \}
\]
is a stable system.
\end{lemma}

\proof
For $n = 0$, we have $A^n = 1$, so $\mathcal{A}$ contains isomorphisms. Closure under composition and stability under pullbacks are straightforward.
\endproof

Using $\mathcal{A}$, we define the quotient category of spans:
\[
\mathsf{Span}_{\mathcal{A}}(\mathcal{C}, \mathcal{A}).
\]

\begin{proposition}
The map $\mathbf{Q} : \mathcal{C} \to \mathsf{Span}_{\mathcal{A}}(\mathcal{C}, \mathcal{A})$ sending a morphism $f$ to $[1, f]_{\mathcal{A}}$ is a functor. Furthermore, if there exists a morphism $1 \to A$, then $\mathbf{Q}$ is faithful.
\end{proposition}

\proof
It is clear that $\mathbf{Q}$ defines a functor. To prove faithfulness, suppose $[1, f]_{\mathcal{A}} = [1, g]_{\mathcal{A}}$ for morphisms $f, g: B \to C$ in $\mathcal{C}$. By Lemma~\ref{describe for stab class rel}, there exist morphisms $p, q \in \mathcal{A}$ such that the following diagram commutes:
\[
\xymatrix{
&& B \ar[lld]_{1} \ar[rrd]^{f} && \\
B && A^n \times B \ar[d]_q \ar[u]^p && C \\
&& B \ar[ull]^{1} \ar[urr]_{g} &&
}
\]
This implies $p = q$. Since there is a morphism $1 \to A$, the morphism $p$ is an epimorphism. Therefore, $f = g$, and so $\mathbf{Q}$ is faithful.
\endproof

\begin{theorem}
The functor $\mathbf{Q} : \mathcal{C} \to \mathsf{Span}_{\mathcal{A}}(\mathcal{C}, \mathcal{A})$ preserves finite products.
\end{theorem}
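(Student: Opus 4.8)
The plan is to exploit that $\mathbf Q$ is the identity on objects, so it is enough to check that the terminal object $1$ of $\C$ is still terminal in $\mathsf{Span}_\A(\C,\A)$ and that, for all $B,C\in\C$, the pair $\big(\mathbf Q(p_1),\mathbf Q(p_2)\big)$, where $p_1\colon B\times C\to B$ and $p_2\colon B\times C\to C$ are the $\C$-projections, is a product diagram in $\mathsf{Span}_\A(\C,\A)$; since the finite products of a cartesian category are generated by the terminal object and binary products, this yields preservation of all finite products. For the terminal object, any arrow $X\to 1$ is a class $[s,!]_\A$ with $s\colon A^n\times X\to X$ a projection (hence in $\A$) and $!$ the unique map to $1$; since $s$ is itself a morphism of spans $(s,!)\to(1_X,!_X)$ (indeed $1_X\circ s=s$ and $!\circ s=!$), we get $[s,!]_\A=[1_X,!_X]_\A=\mathbf Q(!_X)$, so there is a unique arrow $X\to 1$. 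Note that no point $1\to A$ is needed for this.

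For the existence of a mediating arrow, let $\alpha=[s,f]_\A\colon X\to B$ with $s\colon A^n\times X\to X$ and $\beta=[t,g]_\A\colon X\to C$ with $t\colon A^m\times X\to X$. I would move both to the common apex $U=A^{n}\times A^{m}\times X$ along the projections $\rho_1\colon U\to A^n\times X$ and $\rho_2\colon U\to A^m\times X$; these lie in $\A$ because $\A$, being a stable class, contains the symmetry isomorphisms and is closed under composition. Since $\rho_i$ defines a morphism of spans, $\alpha=[u,f\rho_1]_\A$ and $\beta=[u,g\rho_2]_\A$, where $u\colon U\to X$ is the projection; here $s\rho_1=t\rho_2=u$ precisely because $s$ and $t$ are themselves projections onto $X$. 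Now set $h=\langle f\rho_1,g\rho_2\rangle\colon U\to B\times C$ and $\gamma=[u,h]_\A$. Composing with $\mathbf Q(p_1)=[1_{B\times C},p_1]_\A$ pulls $h$ back along an identity, so $\mathbf Q(p_1)\circ\gamma=[u,p_1h]_\A=[u,f\rho_1]_\A=\alpha$, and symmetrically $\mathbf Q(p_2)\circ\gamma=\beta$.

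For uniqueness, suppose $\gamma'=[v,k]_\A$ with $v\colon A^\ell\times X\to X$ satisfies $\mathbf Q(p_1)\circ\gamma'=\alpha$ and $\mathbf Q(p_2)\circ\gamma'=\beta$, i.e. $[v,p_1k]_\A=[s,f]_\A$ and $[v,p_2k]_\A=[t,g]_\A$. Applying Lemma~\ref{describe for stab class rel} to both equalities and then amalgamating the resulting apexes (each of the form $A^{j}\times X$ with its canonical projection to $X$) into a single apex $W=A^{N}\times X$, one obtains projections $\pi\colon W\to A^\ell\times X$ and $\pi'\colon W\to U$, chosen compatibly, with $p_1k\pi=f\rho_1\pi'$ and $p_2k\pi=g\rho_2\pi'$. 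Since $p_1,p_2$ are jointly monic in $\C$ this forces $k\pi=h\pi'$, while $v\pi$ and $u\pi'$ are both the projection $W\to X$. Hence $\pi$ and $\pi'$ are morphisms of spans witnessing $[v,k]_\A=[v\pi,k\pi]_\A=[u\pi',h\pi']_\A=[u,h]_\A=\gamma$, so the mediator is unique.

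I expect the only real difficulty to be bookkeeping: keeping straight which copies of $A$ a given projection discards when the common apexes $A^{j}\times X$ are formed and amalgamated, and recording once and for all that all these reordering and padding projections belong to $\A$ (here the stable-class axioms — closure under composition, all isomorphisms present, pullbacks of $\A$-maps lying in $\A$ — do the work). Once this is set up, every commuting square that has to be verified collapses to the universal property of the products $A^{k}\times X$ in $\C$ together with the joint monicity of $p_1,p_2$.
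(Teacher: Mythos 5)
Your proof is correct and follows essentially the same route as the paper's: move both spans to a common apex $A^{n}\times A^{m}\times X$ along $\A$-projections, pair the right legs with $\langle-,-\rangle$, and for uniqueness apply Lemma~\ref{describe for stab class rel} twice, amalgamate the apexes, and use the joint monicity of the product projections. The only (welcome) addition is your explicit check that the terminal object remains terminal, which the paper's proof omits but which is needed for full preservation of finite products.
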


\proof
We show that
\(
\xymatrix{
B && B \times C \ar[ll]_{[1, \pi_B]_{\mathcal{A}}} \ar[rr]^{[1, \pi_C]_{\mathcal{A}}} && C
}
\)
is a product in $\mathsf{Span}_{\mathcal{A}}(\mathcal{C}, \mathcal{A})$, where $\pi_B$ and $\pi_C$ are the projections in $\mathcal{C}$.
Let
\(
\xymatrix{
B && D \ar[ll]_{[d_1, f]_{\mathcal{A}}} \ar[rr]^{[d_2, g]_{\mathcal{A}}} && C
}
\)
be a span, where $[d_1, f]_{\mathcal{A}}$ and $[d_2, g]_{\mathcal{A}}$ are represented by the diagrams in \(\C\):
\[
\xymatrix{
&& A^n \times D \ar[lld]_{d_1} \ar[rrd]^f && \\
D && && B
}
\quad
\xymatrix{
&& A^m \times D \ar[lld]_{d_2} \ar[rrd]^g && \\
D && && C
}
\]

Assuming $m \leq n$, there exists a projection $\pi: A^n \times D \to A^m \times D$. Then,
\[
[d_2, g]_{\mathcal{A}} = [d_2 \pi, g \pi]_{\mathcal{A}}.
\]
Since both $d_1$ and $d_2 \pi$ are projections from $A^n \times D$ to $D$, we can assume $n = m$ and $d_1 = d_2$. Let $d = d_1$ and $h = \langle f, g \rangle$. Then,
\[
[1, \pi_B]_{\mathcal{A}} \circ [d, h]_{\mathcal{A}} = [d, f]_{\mathcal{A}}, \quad [1, \pi_C]_{\mathcal{A}} \circ [d, h]_{\mathcal{A}} = [d, g]_{\mathcal{A}}.
\]

To prove uniqueness, suppose $[e, k]_{\mathcal{A}}$ is another morphism such that:
\[
[1, \pi_B]_{\mathcal{A}} \circ [e, k]_{\mathcal{A}} = [d, f]_{\mathcal{A}}, \quad [1, \pi_C]_{\mathcal{A}} \circ [e, k]_{\mathcal{A}} = [d, g]_{\mathcal{A}}.
\]
By Lemma~\ref{describe for stab class rel}, there exist morphisms $a, b, a', b' \in \mathcal{A}$ such that the following diagrams commute:
\[
\xymatrix{
&& A^n \times D \ar[lld]_d \ar[rrd]^f && \\
D && A^{n + r + s} \times D \ar[d]_b \ar[u]^a && B \\
&& A^r \times D \ar[ull]^e \ar[urr]_{\pi_B k} &&
}
\quad
\xymatrix{
&& A^n \times D \ar[lld]_d \ar[rrd]^g && \\
D && A^{n + r + s'} \times D \ar[d]_{b'} \ar[u]^{a'} && C \\
&& A^r \times D \ar[ull]^e \ar[urr]_{\pi_C k} &&
}
\]

As before, we may assume $s = s'$, $a = a'$, and $b = b'$. Then the diagram:
\[
\xymatrix{
&& A^n \times D \ar[lld]_d \ar[rrd]^{\langle f, g \rangle} && \\
D && A^{n + r + s} \times D \ar[d]_b \ar[u]^a && B \times C \\
&& A^r \times D \ar[ull]^e \ar[urr]_k &&
}
\]
commutes, and thus $[e, k]_{\mathcal{A}} = [d, h]_{\mathcal{A}}$.
\endproof

So far, we have constructed the category $\mathsf{Span}_\A(\C,\A)$ as a quotient of spans. As mentioned earlier, we will represent the desired indeterminate morphism as a morphism in this category.
The morphism $[!_A,1_A]_\A:1\rightarrow A$ is the indeterminate morphism we are interested in. We denote this morphism by $x$, and we write the category $\mathsf{Span}_\A(\C,\A)$ as $\C[x]$.

Morphisms in $\C[x]$ can be interpreted as polynomials in $x$. The central role of $x$ becomes clearer through a universal property presented in Theorem~\ref{univ_prop}. To prove that theorem, we first state the following proposition. Here, $x^n$ denotes the unique morphism
\[
x \times \cdots \times x : 1 = 1 \times \cdots \times 1 \to A^n = A \times \cdots \times A.
\]

\begin{proposition}\label{xn}
	\begin{itemize}
		\item[(a)] $x^n = [!_{A^n}, 1_{A^n}]$.
		\item[(b)] $x^n \times 1_B = [\pi, 1_{A^n \times B}]$, where $\pi : A^n \times B \to B$ is the projection.
	\end{itemize}
\end{proposition}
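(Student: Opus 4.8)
The plan is to unwind the definitions of $x$, of products in $\C[x]$, and of composition of spans, and to verify each clause by exhibiting an explicit vertical morphism in $\A$ between the relevant spans, using Lemma~\ref{describe for stab class rel} where convenient.

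For part (a), I would argue by induction on $n$. The base case $n=0$ is immediate since $x^0=1_1=[!_1,1_1]$ (here $A^0=1$), and $n=1$ is the definition of $x$. For the inductive step, I compute $x^{n+1}=x^n\cdot x$ as a product of morphisms $1\to A^{n+1}$; by the previous theorem $\mathbf Q$ preserves finite products, so it suffices to show $\langle x^n,x\rangle=[!_{A^{n+1}},1_{A^{n+1}}]$. Using the inductive hypothesis $x^n=[!_{A^n},1_{A^n}]$ and the product description from the proof of the product-preservation theorem, the pairing of $[!_{A^n},1_{A^n}]$ with $[!_A,1_A]$ is computed by first bringing the two spans to a common left leg — this is exactly the step in that proof where one replaces $A^m\times D$ by $A^n\times D$ along a projection — and then pairing the right legs. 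Concretely, $[!_{A^n},1_{A^n}]=[!_{A^n},1_{A^n}]$ and $[!_A,1_A]$, both over $D=1$, have left legs which are projections $A^n\times 1\to 1$ and $A\times 1\to 1$; after padding to a common $A^{n}\times 1$ versus $A\times 1$ one pairs to get the span $(!_{A^{n+1}},1_{A^{n+1}})$ over $1$. I expect the only mild friction here to be keeping the bookkeeping of which copies of $A$ get identified straight; the identification is forced because any two projections $A^k\times 1\to 1$ are equal.

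For part (b), I would compute the product morphism $x^n\times 1_B:A^n\times B\to A^n\times B$ directly — note the domain and codomain here are objects of $\C[x]$, namely $A^n$ (sic: the source is really the object underlying $x^n$, which is $1$, times $B$), so more precisely $x^n\times 1_B$ is the morphism $\langle x^n\circ \,?,\,1_B\rangle$ into the product $A^n\times B$. Cleanest is to use the universal property: $x^n\times 1_B$ is by definition the unique morphism $p:B\to A^n\times B$ in $\C[x]$ with $[1,\pi_{A^n}]_\A\circ p=x^n$ (postcomposed with the terminal map) and $[1,\pi_B]_\A\circ p=1_B$. So I would simply check that the candidate $[\pi,1_{A^n\times B}]_\A$, where $\pi:A^n\times B\to B$ is the projection, satisfies both equations, and then invoke uniqueness from the product-preservation theorem. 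The two checks amount to span compositions: $[1,\pi_{A^n}]_\A\circ[\pi,1_{A^n\times B}]_\A$ is computed by a pullback along $\pi$ and one reads off $[\pi,\pi_{A^n}]_\A$, which equals $x^n=[!_{A^n},1_{A^n}]$ after composing with a projection in $\A$ to a common vertex (so the $B$ factor is absorbed); similarly $[1,\pi_B]_\A\circ[\pi,1_{A^n\times B}]_\A=[\pi,\pi_B]_\A=[1_B,1_B]_\A=1_B$, again collapsing along a projection in $\A$. I do not expect any serious obstacle; the main point to be careful about is that all the connecting maps witnessing the equalities are genuinely projections $A^k\times(-)\to(-)$ and hence lie in $\A$, so the $\sim_\A$-identifications are legitimate. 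The argument for (a) could alternatively be folded into (b) by taking $B=1$, but I would keep them separate for readability.
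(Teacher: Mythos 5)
Your proposal is correct and follows essentially the same route as the paper: both parts are settled by the universal property of products in $\mathsf{Span}_\A(\C,\A)$ (guaranteed by the product-preservation theorem), checking that the candidate span makes the relevant product diagram commute and invoking uniqueness, with (a) done by induction. The paper is merely terser, leaving the span-composition checks and the $\sim_\A$-identifications along projections implicit, whereas you spell them out.
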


\proof
\begin{itemize}
		\item[(a)] For $n = 2$, the uniqueness of $x^2$ in the following commutative diagram implies $x^2 = [!_{A^2}, 1_{A^2}]_\A$:
		\[
		\xymatrix{
		1 \ar[d]_x && 1 \ar[ll] \ar[rr] \ar[d]|{x^2 = [!_{A^2}, 1_{A^2}]_\A} && 1 \ar[d]^x \\
		A && A^2 \ar[rr] \ar[ll] && A
		}
		\]
		By induction on $n$, we obtain $x^n = [!_{A^n}, 1_{A^n}]$.

		\item[(b)] The uniqueness of $x^n \times 1_B$ in the following diagram implies $x^n \times 1_B = [\pi, 1_{A^n \times B}]$:
		\[
		\xymatrix{
		1 \ar[dd]_{x^n} && B \ar[ll] \ar[rr] \ar[dd]|{x^n \times 1_B = [\pi, 1_{A^n \times B}]_\A} && B \ar[dd]^1 \\
		&&&& \\
		A^n && A^n \times B \ar[rr] \ar[ll] && B
		}
		\]
	\end{itemize}
\endproof
The following theorem gives the universal property of $\mathsf{Span}_\A(\C,\A)$ as a category obtained by freely adding an indeterminate morphism.

\begin{theorem}\label{univ_prop}
Let $\mathbf F : \C \to \C'$ be a functor that preserves finite products, and let $a : 1 \to \mathbf F(A)$ be a morphism in $\C'$. Then there exists a unique functor $\mathbf F' : \mathsf{Span}_\A(\C,\A) \to \C'$ such that $\mathbf F'(x) = a$ and the following triangle commutes:
\[
\xymatrix{
\C \ar[rr] \ar[d]_{\mathbf F} && \mathsf{Span}_\A(\C,\A) \ar[lld]^{\mathbf F'} \\
\C' &&
}
\]
\end{theorem}

\proof
Using Proposition~\ref{xn}, a morphism $[p, f]$ with
\(
\xymatrix{
B & A^n \times B \ar[l]_p \ar[r]^f & C
}
\) in \(\C\)
can be written as $[p, f] = [1, f] [p, 1] = [1, f](x^n \times 1_B)$. Based on this, we define:
\[
\mathbf F'[p, f] := \mathbf F(f) \circ (a^n \times 1_{\mathbf F(B)}).
\]

To show that $\mathbf F'$ is well defined, suppose $(p, f) \le_\A (p', f')$ via the following diagram:
\[
\xymatrix{
&& A^n \times B \ar[lld]_p \ar[rrd]^f \ar[dd]_\pi && \\
B && && C \\
&& A^m \times B \ar[ull]^{p'} \ar[urr]_{f'} &&
}
\]
Then we compute:
\[
\mathbf F'[p, f] = \mathbf F(f) \circ (a^n \times 1_{\mathbf F(B)}) = \mathbf F(f') \circ \mathbf F(\pi) \circ (a^n \times 1_{\mathbf F(B)}) = \mathbf F(f') \circ (a^m \times 1_{\mathbf F(B)}) = \mathbf F'[p', f'].
\]

By definition of $\mathbf F'$, we obtain the commutativity of the triangle, as well as the uniqueness.
\endproof

The following theorem shows that the construction of indeterminate morphisms is hereditary. This means that for objects \(A\) and \(B\) in \(\C\), one can first add an indeterminate morphism \(x:1 \to A\) and then add another indeterminate morphism \(y:1 \to B\), or add both of them at once. Before stating the theorem, we define the following classes:
\[
\B = \{ [1,\pi]_\A : B^n \times C \to C \mid [1,\pi]_\A \text{ is a projection in } \mathsf{Span}_\A(\C,\A) \}
\]
\[
\A \circ \B = \{ A^n \times B^m \times C \to C \mid \pi \text{ is a projection in } \C \}
\]

\begin{theorem}
The category \(\mathsf{Span}_\B(\mathsf{Span}_\A(\C,\A),\B)\) is isomorphic to \(\mathsf{Span}_{\A \circ \B}(\C,\A \circ \B)\).
\end{theorem}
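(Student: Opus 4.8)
The plan is to produce an identity-on-objects isomorphism, in both directions, by putting morphisms on each side into a normal form and checking that the two descriptions of a morphism agree. Throughout write $\C[x]:=\mathsf{Span}_\A(\C,\A)$.

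First I would fix representatives. An arrow $C\to C'$ in $\mathsf{Span}_{\A\circ\B}(\C,\A\circ\B)$ is $[p,f]_{\A\circ\B}$ with $p\colon A^n\times B^m\times C\to C$ a projection and $f\colon A^n\times B^m\times C\to C'$ in $\C$, and (postcomposing with an isomorphism, which lies in $\A\circ\B$ and does not change the class) we may take $p$ canonical. An arrow $C\to C'$ in $\mathsf{Span}_\B(\C[x],\B)$ is a $\B$-span in $\C[x]$; its left leg lies in $\B$, hence is $[1,\pi]_\A\colon B^m\times C\to C$ with $\pi$ the canonical projection of $\C$, and its right leg is a morphism of $\C[x]$, hence is $[\rho,f]_\A\colon B^m\times C\to C'$ with $\rho\colon A^n\times(B^m\times C)\to B^m\times C$ the canonical $A$-dropping projection and $f\colon A^n\times B^m\times C\to C'$ in $\C$. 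So both hom-sets carry the same underlying data $(n,m,f)$, and I define $\Phi$ and $\Psi$ to be the identity on objects and, on morphisms, to pass between the two normal forms:
$$\Phi[p,f]_{\A\circ\B}=[\,[1,\pi]_\A\,,\,[\rho,f]_\A\,]_\B,\qquad \Psi[\,[1,\pi]_\A\,,\,[\rho,f]_\A\,]_\B=[\pi\rho,f]_{\A\circ\B},$$
where $\pi\rho\colon A^n\times B^m\times C\to C$ is the composite projection.

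The substantive point is that $\Phi$ (equivalently $\Psi$) respects the equivalence relations. I would use Lemma~\ref{describe for stab class rel} to rewrite each relation as a single common-vertex span. On the left, $(p,f)\sim_{\A\circ\B}(p',f')$ yields $a,b\in\A\circ\B$ out of a common object $A^N\times B^M\times C$ with $pa=p'b$ and $fa=f'b$; the key observation is that such a projection $a$ factors as ``drop copies of $A$'' followed by ``drop copies of $B$'', and likewise for $b$. The $A$-dropping parts lie in $\A$ and are therefore absorbed by the relation that already collapses arrows of $\C[x]$ (Lemma~\ref{describe for stab class rel} again, now for $\sim_\A$), while the $B$-dropping parts lie in $\B$ and exhibit a morphism of $\B$-spans in $\C[x]$ between the two normal forms. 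Running this, and its converse, shows that $[p,f]_{\A\circ\B}=[p',f']_{\A\circ\B}$ if and only if the associated $\B$-spans in $\C[x]$ are $\sim_\B$-equivalent. One can organise the bookkeeping by first proving the auxiliary statement that $[\rho,f]_\A=[\rho',f']_\A$ in $\C[x]$ iff $[\pi\rho,f]_{\A\circ\B}=[\pi\rho',f']_{\A\circ\B}$, and then handling the outer relation $\sim_\B$ separately.

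Finally, $\Psi$ is inverse to $\Phi$ as soon as well-definedness is known, since the two are mutually inverse on normal forms. For functoriality of $\Phi$: identities correspond because $1_C=[1,1]_{\A\circ\B}$ matches $[\,[1,1]_\A\,,\,[1,1]_\A\,]_\B$; and composites are preserved because in all the categories involved a composite is computed by a pullback, and here all the relevant pullbacks are explicit — a pullback of a factor-dropping projection along an arbitrary map is just the product with the dropped factor together with the evident projection, and this holds in $\C[x]$ as well, since $\mathbf Q\colon\C\to\C[x]$ preserves finite products. Matching the composite of $[p,f]_{\A\circ\B}$ with $[p',g]_{\A\circ\B}$ against the iterated pullback on the other side (first inside $\C[x]$, then at the outer level) is then a direct calculation. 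I expect the only real obstacle to be the well-definedness step above: keeping the $A$-part and the $B$-part of an $\A\circ\B$-morphism apart and tracking which relation each one feeds into. An alternative route that avoids the construction entirely is to establish for $\A\circ\B$ the analogue of the Proposition describing the universal property of $\mathsf{Span}_\A(\C,\A)$, and then observe that both $\mathsf{Span}_\B(\C[x],\B)$ and $\mathsf{Span}_{\A\circ\B}(\C,\A\circ\B)$ solve the same universal problem of freely adjoining points $1\to A$ and $1\to B$ to $\C$, hence are canonically isomorphic.
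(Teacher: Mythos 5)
Your proposal is correct and follows essentially the same route as the paper: the paper defines exactly your map $\Psi$, namely $[[1,pr]_\A,[p,f]_\A]\longmapsto[pr.p,f]_{\A\circ\B}$, verifies well-definedness by the same kind of diagram chase through a common vertex, and then declares the map to be ``obviously'' an isomorphic functor. Your write-up simply fills in the normal-form, injectivity and functoriality details that the paper leaves implicit, and your closing remark about deriving the isomorphism from a shared universal property is a viable alternative the paper does not pursue.
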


\proof
We define the map
\[
[[1,pr]_\A,[p,f]_\A]_\B \longmapsto [pr.p,f]_{\A \circ \B}.
\]
To show that this map is well-defined, suppose
\[
[[1,pr]_\A,[p,f]_\A] \le_\B [[1,pr']_\A,[q,g]_\A]
\]
as shown in the diagram, formed in \(\mathsf{Span}_\A(\C,\A)\):
\[
\xymatrix{
&& B^n \times C \ar[lld]_{[1,pr]_\A} \ar[rrd]^{[p,f]_\A} \ar[dd]_{[1,\pi]_\A} && \\
C && && D \\
&& B^r \times C \ar[ull]^{[1,pr']_\A} \ar[urr]_{[q,g]_\A} &&
}
\]
Then we compute:
\[
\begin{array}{rl}
[pr'.q, g]_{\A \circ \B} &= [q, g]_{\A \circ \B} [pr', 1]_{\A \circ \B} \\
&= [q, g]_{\A \circ \B} [1, \pi]_{\A \circ \B} [\pi, 1]_{\A \circ \B} [pr', 1]_{\A \circ \B} \\
&= [p, f]_{\A \circ \B} [pr, 1]_{\A \circ \B} \\
&= [pr.p, f]_{\A \circ \B}.
\end{array}
\]
So the map is well defined. It is straightforward to check that this map defines an isomorphism of categories.
\endproof

\begin{corollary}\label{prpr_aob}
The functor \(\C \to \mathsf{Span}_{\A \circ \B}(\C,\A \circ \B)\), defined by \(f \mapsto [1, f]_{\A \circ \B}\), preserves finite products.
\end{corollary}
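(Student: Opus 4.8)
The plan is to realise the functor $f\mapsto[1,f]_{\A\circ\B}$ as a composite of two finite-product-preserving functors and then read it off through the isomorphism of the preceding theorem.

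First I would observe that the constructions of this section apply to \emph{any} cartesian category, and that $\C[x]=\mathsf{Span}_\A(\C,\A)$ is itself cartesian. Indeed, $\mathbf Q\colon\C\to\C[x]$ preserves finite products and is the identity on objects, so the image under $\mathbf Q$ of a product cone in $\C$ is a product cone in $\C[x]$, and since every object of $\C[x]$ is an object of $\C$, this shows $\C[x]$ has a terminal object and binary products. Applying the section's constructions to the cartesian category $\C[x]$ and its object $B$ then produces the indeterminate $y\colon1\to B$, the category $\mathsf{Span}_\B(\C[x],\B)$, and the functor $\mathbf Q'\colon\C[x]\to\mathsf{Span}_\B(\C[x],\B)$ sending $g$ to $[1,g]_\B$, which preserves finite products by the earlier theorem applied with $\C[x]$ in place of $\C$.

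Next, the composite $\mathbf Q'\mathbf Q\colon\C\to\mathsf{Span}_\B(\C[x],\B)$ preserves finite products, being a composite of two such functors. The preceding theorem supplies an isomorphism of categories $\Phi\colon\mathsf{Span}_\B(\C[x],\B)\xrightarrow{\sim}\mathsf{Span}_{\A\circ\B}(\C,\A\circ\B)$, and an isomorphism of categories preserves products; hence $\Phi\mathbf Q'\mathbf Q$ preserves finite products. It then remains only to check that $\Phi\mathbf Q'\mathbf Q$ is the functor named in the statement: for $f\colon X\to Y$ in $\C$ we have $\mathbf Q(f)=[1_X,f]_\A$ and $\mathbf Q'([1_X,f]_\A)=[1_X,[1_X,f]_\A]_\B$, and substituting this into the formula $[[1,pr]_\A,[p,f]_\A]\mapsto[pr\cdot p,f]_{\A\circ\B}$ of the previous theorem (with $pr=1_X$ and $p=1_X$) gives $[1_X,f]_{\A\circ\B}$, which is exactly the value of the functor in the corollary.

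The one step requiring some care is this last identification: one must be sure that the trivial legs $1_X$ appearing in $\mathbf Q'\mathbf Q(f)$ are genuinely the $\B$-leg and $\A$-leg to which the isomorphism formula refers (that is, that they sit at the degenerate indices $n=0$ and $m=0$), so that the composite projection $pr\cdot p$ really collapses to $1_X$. Beyond that bookkeeping, the corollary is a formal consequence of facts already established --- preservation of finite products by $\mathbf Q$, by $\mathbf Q'$, and by the isomorphism $\Phi$ --- so no substantial new argument is needed.
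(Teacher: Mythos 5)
Your proposal is correct and follows exactly the paper's own route: the paper likewise factors the functor as $\C\rightarrow \mathsf{Span}_\A(\C,\A)\rightarrow \mathsf{Span}_\B(\mathsf{Span}_\A(\C,\A),\B)\cong\mathsf{Span}_{\A\circ\B}(\C,\A\circ\B)$ and invokes preservation of finite products by each factor. You merely supply more of the bookkeeping (cartesianness of $\C[x]$ and the identification of the composite with $f\mapsto[1,f]_{\A\circ\B}$) that the paper leaves implicit.
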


\proof
This functor is the composition of the following functors, each of which preserves finite products. Therefore, the composition also preserves finite products.
\[
\C \to \mathsf{Span}_\A(\C,\A) \to \mathsf{Span}_\B(\mathsf{Span}_\A(\C,\A),\B) \cong \mathsf{Span}_{\A \circ \B}(\C,\A \circ \B)
\]
\endproof

As we have seen, adding indeterminate morphisms \(x:1 \to A\) and \(y:1 \to B\) to the category \(\C\) results in the category \(\mathsf{Span}_{\A \circ \B}(\C,\A \circ \B)\), a quotient of spans. The definition of the compatible system \(\A \circ \B\) suggests a natural way to define a quotient category of spans that includes all such indeterminate morphisms by using a more general compatible system. To achieve this, we use the class of all projections, denoted by \(\Pi\), as a generalization of \(\A \circ \B\). It is straightforward to check that \(\Pi\) is a stable system. Hence, we can form the following quotient category:
\[
\mathsf{Span}_\Pi(\C,\Pi)
\]

\begin{theorem}
The functor \(\mathbf Q : \C \to \mathsf{Span}_\Pi(\C,\Pi)\), defined by \(f \mapsto [1,f]_\Pi\), preserves finite products.
\end{theorem}

\proof
Let
\(
\xymatrix{C && C \times D \ar[ll]_{\pi_1} \ar[rr]^{\pi_2} && D}
\)
be a product diagram in \(\C\). We will show that
\(
\xymatrix{C && C \times D \ar[ll]_{[1,\pi_1]_\Pi} \ar[rr]^{[1,\pi_2]_\Pi} && D}
\)
is a product diagram in \(\mathsf{Span}_\Pi(\C,\Pi)\). Suppose we are given a diagram
\(
\xymatrix{C && E \ar[ll]_{[p,f]_\Pi} \ar[rr]^{[q,g]_\Pi} && D}
\)
with \( (p,f) \) and \( (q,g) \) shown as:
\[
\begin{array}{c@{\hspace{2cm}}c}
\xymatrix{
&& A \times E \ar[lld]_p \ar[rrd]^f && \\
E && && C
}
&
\xymatrix{
&& B \times E \ar[lld]_q \ar[rrd]^g && \\
E && && D
}
\end{array}
\]
in \(\C\). By Corollary~\ref{prpr_aob}, there exists a unique morphism \([r,h]_{\A \circ \B} : E \to C \times D\) such that the triangles in the following diagram commute:
\[
\xymatrix{
C && C \times D \ar[ll]_{[1,\pi_1]_{\A \circ \B}} \ar[rr]^{[1,\pi_2]_{\A \circ \B}} && D \\
&& E \ar[llu]^{[p,f]_{\A \circ \B}} \ar[rru]_{[q,g]_{\A \circ \B}} \ar[u]_{[r,h]_{\A \circ \B}} &&
}
\]
Since \(\A \circ \B \subseteq \Pi\), the same morphism \([r,h]_\Pi\) also makes the following diagram commute:
\[
\xymatrix{
C && C \times D \ar[ll]_{[1,\pi_1]_{\Pi}} \ar[rr]^{[1,\pi_2]_{\Pi}} && D \\
&& E \ar[llu]^{[p,f]_{\Pi}} \ar[rru]_{[q,g]_{\Pi}} \ar[u]_{[r,h]_{\Pi}} &&
}
\]

To prove uniqueness, suppose another morphism \([s,k]_\Pi\) also satisfies:
\[
[1,\pi_1]_\Pi \circ [s,k]_\Pi = [p,f]_\Pi \quad \text{and} \quad [1,\pi_2]_\Pi \circ [s,k]_\Pi = [q,g]_\Pi.
\]
By Lemma~\ref{describe for stab class rel}, there exist projections such that the following diagrams commute:
\[
\xymatrix{&& X\times E\ar[lld]_{s}\ar[rrd]^{\pi_1 k}&&\\
			E&& Y\times X\times A\times E\ar[d]\ar[u]&&C\\
			&&A\times E\ar[ull]^{p}\ar[urr]_{f}&&}\hfil	
		\xymatrix{&& X\times E\ar[lld]_{s}\ar[rrd]^{\pi_2 k}&&\\
			E&& Z\times X\times B\times E\ar[d]\ar[u]&&D\\
			&&B\times E\ar[ull]^{p}\ar[urr]_{f}&&}
\]

Let \(\Pi' = \A \circ \B \circ \X \circ \Y \circ \Z\). An extension of Corollary~\ref{prpr_aob} shows that \([s,k]_{\Pi'} = [r,h]_{\Pi'}\), and so \([s,k]_\Pi = [r,h]_\Pi\). Therefore, \([r,h]_\Pi\) is unique, and the diagram is indeed a product in \(\mathsf{Span}_\Pi(\C,\Pi)\).
\endproof

\begin{theorem}
If $\C$ is a cartesian closed category, then so is $\mathsf{Span}_\Pi(\C,\Pi)$.
\end{theorem}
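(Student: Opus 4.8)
The plan is to show that, for objects $B,C$ of $\mathsf{Span}_\Pi(\C,\Pi)$, the exponential $C^B$ formed in $\C$ --- regarded, through $\mathbf Q$, as an object of $\mathsf{Span}_\Pi(\C,\Pi)$ --- is an exponential of $C$ by $B$ in $\mathsf{Span}_\Pi(\C,\Pi)$. By the preceding theorem $\mathbf Q$ preserves finite products, so $\mathsf{Span}_\Pi(\C,\Pi)$ already has a terminal object and binary products, computed as in $\C$ via $\mathbf Q$; it therefore remains only to produce exponentials, and for this it suffices to check that, for each $B$, the functor $-\times B$ on $\mathsf{Span}_\Pi(\C,\Pi)$ admits a right adjoint whose value at $C$ is $C^B$. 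So I will construct, for every object $E$, a bijection
$$\mathrm{Hom}_{\mathsf{Span}_\Pi(\C,\Pi)}(E\times B,\,C)\;\cong\;\mathrm{Hom}_{\mathsf{Span}_\Pi(\C,\Pi)}(E,\,C^B)$$
that is natural in $E$; naturality in $C$ is then automatic.

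The first step is a workable description of the hom-sets. A morphism $E\to C$ in $\mathsf{Span}_\Pi(\C,\Pi)$ is represented by a span $\xymatrix{E & A\times E\ar[l]_{\pi}\ar[r]^{f} & C}$ with $\pi$ a projection and $f$ a morphism of $\C$: any $\Pi$-morphism with codomain $E$ is, up to a vertical isomorphism, a projection $A\times E\to E$, and vertically isomorphic spans represent the same morphism. By Lemma \ref{describe for stab class rel} (applied with $\F=\Pi$), two representatives $f:A\times E\to C$ and $f':A'\times E\to C$ define the same morphism precisely when there are an object $P$ and projections $p:P\to A\times E$, $q:P\to A'\times E$ with $\pi p=\pi q$ and $fp=f'q$.

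Next I would define the comparison on representatives, by currying and uncurrying in $\C$. A representative $h:A\times E\to C^B$ of a morphism $E\to C^B$ is sent to the representative $\widetilde h:=\mathrm{ev}_\C\circ(h\times 1_B):A\times E\times B\to C$ of a morphism $E\times B\to C$ (using $A\times(E\times B)\cong A\times E\times B$), and conversely $g:A\times E\times B\to C$ is sent to its transpose $\overline g:A\times E\to C^B$. To see the first assignment is well defined, suppose $h,h'$ are related via $P,p,q$ as above; then $P\times B$, with the projections $p\times 1_B$ and $q\times 1_B$ --- which are again projections, as $-\times B$ carries a product projection to a product projection --- witnesses that $\widetilde h$ and $\widetilde{h'}$ are $\sim_\Pi$-related, since $\pi(p\times 1_B)=\pi(q\times 1_B)$ and
$$\widetilde h\circ(p\times 1_B)=\mathrm{ev}_\C\circ\big((hp)\times 1_B\big)=\mathrm{ev}_\C\circ\big((h'q)\times 1_B\big)=\widetilde{h'}\circ(q\times 1_B).$$
Read backwards, the same computation shows the second assignment is well defined, and since currying and uncurrying are mutually inverse in $\C$ the two induced maps are mutually inverse; hence each is a bijection.

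It remains to check naturality in $E$. Given a morphism $[\rho,u]:E'\to E$ of $\mathsf{Span}_\Pi(\C,\Pi)$ with $u:A_0\times E'\to E$, one must verify that uncurrying commutes with precomposition by $[\rho,u]$ on the right and by $[\rho,u]\times 1_B$ on the left. Composition in $\mathsf{Span}_\Pi(\C,\Pi)$ is formed by pulling a projection back along the leg of the other span, and a pullback of a projection is again a projection, computed in the obvious way; chasing these pullback squares reduces the claim to the identity $\widetilde{h\circ(1_A\times u)}=\widetilde h\circ(1_A\times u\times 1_B)$ in $\C$, which is immediate. This bookkeeping with pullbacks of projections is the one genuinely fiddly point of the argument; everything else is a direct transport of the cartesian closed structure of $\C$ through the description of the hom-sets. (Equivalently, one may argue via the evaluation morphism $[1,\mathrm{ev}_\C]_\Pi:C^B\times B\to C$ and verify existence and uniqueness of transposes directly; the content is the same.) Unwinding the construction also shows that $\mathbf Q$ preserves exponentials.
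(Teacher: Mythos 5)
Your proof is correct and is essentially the paper's argument in different packaging: both take the exponential $C^B$ computed in $\C$, transpose the right leg of a representative span, and use Lemma \ref{describe for stab class rel} together with the observation that the witnessing projections can be taken of the form $p\times 1_B$ (the "fiddly point" you flag, which the paper handles with the same degree of informality when it writes $r=pr\times 1$, $s=pr'\times 1$). The only difference is that you phrase cartesian closedness as a natural hom-set bijection, whereas the paper verifies the universal property of the evaluation span $[1,ev]_\Pi$ directly --- the alternative you yourself note in your closing parenthesis.
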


\proof
We want to show that the exponential object $B^A$ in $\C$ is also an exponential object in $\mathsf{Span}_\Pi(\C,\Pi)$. We do this by showing that the evaluation map $ev: B^A \times A \to B$ in $\C$ induces an evaluation map $[1, ev]_\Pi: B^A \times A \to B$ in $\mathsf{Span}_\Pi(\C,\Pi)$.

Let $[p, f]_\Pi : C \times A \to B$ be a morphism in $\mathsf{Span}_\Pi(\C,\Pi)$, where \((p,f)\) is depicted in \(\C\) as
\(
\xymatrix{
C \times A & D \times C \times A \ar[l]_p \ar[r]^{\ \ \ f} & B
}
\).
There exists a unique morphism $\tilde{f} : D \times C \to B^A$ in $\C$ such that the following diagram commutes:
\[
\xymatrix{
B^A \times A \ar[rr]^{ev} && B \\
(D \times C) \times A \ar[u]^{\tilde{f} \times 1} \ar[urr]_f &&
}
\]
In the following diagrams, the first two (the left and middle ones) are formed in $\C$ using product diagrams. In each of them, the left square is a pullback. This implies that the right diagram, in $\mathsf{Span}_\Pi(\C,\Pi)$, also commutes.
\begin{center}\small
		$\xymatrix{B^A & B^A\times A\ar[l]\ar[r] & A\\
		D\times C\ar[u]^{\tilde f} & D\times C\times A\ar[l]\ar[r]\ar[u]|{\tilde f\times 1} & A\ar[u]^{1}}$
	$\xymatrix{C & C\times A\ar[l]\ar[r] & A\\
		D\times C\ar[u]^{\pi} & D\times C\times A\ar[l]\ar[r]\ar[u]|{\pi\times 1} & A\ar[u]^{1}}$
        $\xymatrix{B^A & B^A\times A\ar[l]\ar[r] & A\\
		C\ar[u]|{[\pi,\tilde f]_\Pi} & C\times A\ar[l]\ar[r]\ar[u]|{[p,\tilde f\times 1]_\Pi} & A\ar[u]^{1}}$
	\end{center}

So we have $[p, \tilde{f} \times 1]_\Pi = [\pi, \tilde{f}]_\Pi \times 1$, and therefore
\[
[1, ev]_\Pi \circ ([\pi, \tilde{f}]_\Pi \times 1) = [p, f]_\Pi.
\]

To prove uniqueness of $[\pi, \tilde{f}]_\Pi$, suppose $[\pi', f']_\Pi$ is another morphism such that
\(
[1, ev]_\Pi \circ ([\pi', f']_\Pi \times 1) = [p, f]_\Pi
\).
By Lemma~\ref{describe for stab class rel}, there exist $r, s \in \Pi$ such that the following diagram commutes:
{\small\[
\xymatrix{
&& D \times C \times A \ar[lldd]_{\pi \times 1} \ar[rd]^{\tilde{f} \times 1} && \\
&&& B^A \times A \ar[rd]^{ev} & \\
C \times A && L \times E \times D \times C \times A \ar[uu]^r \ar[dd]_s && B \\
&&& B^A \times A \ar[ur]_{ev} & \\
&& E \times C \times A \ar[lluu]^{\pi' \times 1} \ar[ru]_{f' \times 1} &&
}
\]}
The projections $r$ and $s$ can be written as
$r = pr \times 1 : (L \times E \times D \times C) \times A \longrightarrow D \times C \times A$
and
$s = pr' \times 1 : (L \times E \times D \times C) \times A \longrightarrow E \times C \times A$.
Then we have
$ev(\tilde{f} pr \times 1) = ev(f' pr' \times 1)$,
which implies
$\tilde{f} pr = f' pr'$.
The commutativity of the following diagram shows that $[\pi, \tilde{f}]_\Pi = [\pi', f']_\Pi$, establishing the uniqueness of $[\pi, \tilde{f}]_\Pi$.
Therefore, $\mathsf{Span}_\Pi(\C,\Pi)$ is cartesian closed.
\[
\xymatrix{
&& D \times C \ar[lld]_{\pi} \ar[rrd]^{\tilde{f}} && \\
C && L \times E \times D \times C \ar[d]_{pr'} \ar[u]^{pr} && B^A \\
&& E \times C \ar[ull]^{\pi'} \ar[urr]_{f'} &&
}
\]
\endproof

For each object $A \in \C$, there is a quotient functor
\(
\mathbf{Q} : \mathsf{Span}_\A(\C,\A) \longrightarrow \mathsf{Span}_\Pi(\C,\Pi)
\)
that maps the morphism \( x = [!_A, 1_A]_\A : 1 \rightarrow A \in \mathsf{Span}_\A(\C,\A) \) to
\( x = [!_A, 1_A]_\Pi : 1 \rightarrow A \in \mathsf{Span}_\Pi(\C,\Pi) \).
This means that $\mathsf{Span}_\Pi(\C,\Pi)$ includes all such indeterminate morphisms.

In what follows, we show that $\mathsf{Span}_\Pi(\C,\Pi)$ has this property in a universal way: it is the colimit of a natural diagram in the category $\mathsf{Cat}$. We build this diagram by collecting all quotient functors of the form
\[
\mathbf{Q} : \mathsf{Span}_{\A_1 \circ \A_2 \circ \dots \circ \A_{n-1}}(\C, \A_1 \circ \A_2 \circ \dots \circ \A_{n-1})
\longrightarrow
\mathsf{Span}_{\A_1 \circ \A_2 \circ \dots \circ \A_n}(\C, \A_1 \circ \A_2 \circ \dots \circ \A_n),
\]
where each $\A_i$ is the compatible system associated to the object $A_i$, for $1 \le i \le n$.

\begin{theorem}
	$\mathsf{Span}_\Pi(\C,\Pi)$ is the colimit of the above diagram.
\end{theorem}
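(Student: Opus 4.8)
The plan is to exhibit $\mathsf{Span}_\Pi(\C,\Pi)$, together with an explicit cocone, as the colimit of the diagram in the statement. It is convenient to view that diagram as indexed by the directed poset $\P_{\mathrm{fin}}(\mathrm{Ob}\,\C)$ of finite subsets $S\subseteq\mathrm{Ob}\,\C$: writing $\A_S:=\A_{B_1}\circ\cdots\circ\A_{B_k}$ for $S=\{B_1,\dots,B_k\}$, put $F(S)=\mathsf{Span}_{\A_S}(\C,\A_S)$, with $F(S\subseteq S')$ the functor which is the identity on objects and sends $[p,f]_{\A_S}\mapsto[p,f]_{\A_{S'}}$. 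This is the same diagram, up to canonical isomorphisms, as the one built from the tuples $(A_1,\dots,A_n)$ and the functors $\mathbf Q$, since $\A$ already contains all finite powers $A^m\times B\to B$, so that repeating or permuting entries of a tuple does not change $F$. Each inclusion $\A_S\subseteq\Pi$ gives a functor $\lambda_S\colon F(S)\to\mathsf{Span}_\Pi(\C,\Pi)$, again the identity on objects and $[p,f]_{\A_S}\mapsto[p,f]_\Pi$; this is functorial because composition in both categories is computed by the same pullback, and a pullback of an $\A_S$-projection is again an $\A_S$-, hence a $\Pi$-, projection. The $\lambda_S$ plainly form a cocone, and I will prove that it is universal.

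So let $(\mu_S\colon F(S)\to\D)_S$ be an arbitrary cocone. Since the index category is connected and all transition functors are the identity on objects, the $\mu_S$ share one object-assignment $u_0$. To build $u\colon\mathsf{Span}_\Pi(\C,\Pi)\to\D$, I set $u:=u_0$ on objects and, given $[p,f]_\Pi\colon B\to C$, choose a representative in which $p$ is a projection $\pi\colon W\times B\to B$ (any $\Pi$-span is vertically isomorphic to one of this form); then $(p,f)$ is an $\A_{\{W\}}$-span, and I set $u[p,f]_\Pi:=\mu_{\{W\}}[p,f]_{\A_{\{W\}}}$.

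The heart of the argument is a \emph{finite absorption} lemma: if $(p,f)$ is an $\A_S$-span, $(q,g)$ an $\A_{S'}$-span, and $(p,f)\sim_\Pi(q,g)$, then $(p,f)\sim_{\A_{S''}}(q,g)$ for some finite $S''\supseteq S\cup S'$. To see this I apply Lemma~\ref{describe for stab class rel} with $\F=\Pi$, obtaining one object $P$ and projections $p'\colon P\to D_1$, $q'\colon P\to D_2$ (with $D_1,D_2$ the apexes of the two spans) making the relevant triangles commute. As $p'$ and $q'$ are projections, $P\cong W_1\times D_1$ over $D_1$ and $P\cong W_2\times D_2$ over $D_2$ for some objects $W_1,W_2$; put $S'':=S\cup S'\cup\{W_1,W_2\}$. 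Then $p',q'\in\A_{S''}$ — a projection $W_i\times D_j\to D_j$ with $W_i\in S''$ lies in $\A_{S''}$, and $\A_{S''}$ is closed under composition with the isomorphisms realising $P\cong W_i\times(-)$ — while $(p,f)$ and $(q,g)$ are $\A_{S''}$-spans; since the triangles still commute, the reverse implication of Lemma~\ref{describe for stab class rel} (applied with $\F=\A_{S''}$) gives $(p,f)\sim_{\A_{S''}}(q,g)$. I expect this to be the only step requiring real care; everything else is bookkeeping that exploits the facts that all the categories in sight have the same objects and that projections pull back to projections.

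Granting the absorption lemma, the remaining checks are routine. For well-definedness of $u$: if $(p,f)$ (an $\A_S$-span) and $(q,g)$ (an $\A_{S'}$-span) represent the same $\Pi$-morphism, choose $S''$ as above and compute $\mu_S[p,f]_{\A_S}=\mu_{S''}[p,f]_{\A_{S''}}=\mu_{S''}[q,g]_{\A_{S''}}=\mu_{S'}[q,g]_{\A_{S'}}$, using the cocone equations for $S,S'\subseteq S''$ and the equality of $(p,f),(q,g)$ in $F(S'')$. Functoriality: $u$ preserves identities, and for composable $\Pi$-spans the pullback representing their composite is an $\A_{S\cup S'}$-span, so $u$ of the composite equals $\mu_{S\cup S'}$ of the composite in $F(S\cup S')$, which equals $u[q,g]_\Pi\circ u[p,f]_\Pi$. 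The equations $u\circ\lambda_S=\mu_S$ hold by the definition of $u$ (enlarging $S$ if needed). Finally $u$ is unique with this property, because the $\lambda_S$ are jointly surjective on objects and on morphisms — every $\Pi$-span $B\xleftarrow{\,p\,}W\times B\xrightarrow{\,f\,}C$ lies in the image of $\lambda_{\{W\}}$. Hence $\mathsf{Span}_\Pi(\C,\Pi)$ with the cocone $(\lambda_S)$ is the colimit of the diagram. A minor point to spell out in the final write-up is the repeated use of the fact that $\sim_\Pi$ and each $\sim_{\A_S}$ identify vertically isomorphic spans, which is what legitimises putting the left leg of a $\Pi$-span into the standard form $W\times(-)\to(-)$.
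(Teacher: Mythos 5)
Your proposal is correct and follows essentially the same route as the paper: every $\Pi$-span lives at a finite stage $\A_S$, the induced functor is defined by evaluating the cocone there, and well-definedness is checked by observing that any $\sim_\Pi$-identification is witnessed at a (possibly larger) finite stage. Your ``finite absorption'' lemma is just a more carefully packaged version of the paper's well-definedness check (the paper verifies only the generating one-step relations $\leq_\Pi$ and appeals to naturality, while you use Lemma~\ref{describe for stab class rel} to capture the whole equivalence in one roof), and your reindexing by finite subsets of objects is harmless bookkeeping that the paper leaves implicit.
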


\proof
We first observe that the following diagram forms a natural cocone:
\[
\xymatrix{
\mathsf{Span}_{\A_1 \circ \A_2 \circ \dots \circ \A_{n-1}}(\C, \A_1 \circ \A_2 \circ \dots \circ \A_{n-1})
\ar[rr] \ar[d] &&
\mathsf{Span}_\Pi(\C,\Pi) \\
\mathsf{Span}_{\A_1 \circ \A_2 \circ \dots \circ \A_n}(\C, \A_1 \circ \A_2 \circ \dots \circ \A_n)
\ar[rru] &&
}
\]

Now suppose we are given another cocone to some category $\mathcal{L}$:
\[
\xymatrix{
\mathsf{Span}_{\A_1 \circ \A_2 \circ \dots \circ \A_{n-1}}(\C, \A_1 \circ \A_2 \circ \dots \circ \A_{n-1})
\ar[d] \ar[rrrrrr]^{\mathbf{F}_{\A_1 \circ \A_2 \circ \dots \circ \A_{n-1}}} &&&&&& \mathcal L \\
\mathsf{Span}_{\A_1 \circ \A_2 \circ \dots \circ \A_n}(\C, \A_1 \circ \A_2 \circ \dots \circ \A_n)
\ar[rrrrrru]_{\mathbf{F}_{\A_1 \circ \A_2 \circ \dots \circ \A_n}} &&&&&&
}
\]

We define a functor
\(
\mathbf{U} : \mathsf{Span}_\Pi(\C,\Pi) \longrightarrow \mathcal L
\)
by sending $[\pi, f]_\Pi \mapsto \mathbf{F}_\A[\pi, f]_\A$, where
\(
\xymatrix{B & A \times B \ar[l]_{\pi} \ar[r]^{f} & C}
\)
is a span in $\C$. To check that $\mathbf{U}$ is well defined, suppose we have a commutative diagram where $p$ is a projection:
\[
\xymatrix{
&& A \times B \ar[lld]_{\pi} \ar[rrd]^{f} && \\
B &&&& C \\
&& A \times D \times B \ar[uu]^{p} \ar[llu]_{\pi'} \ar[rru]_{f'} &&
}
\]
This implies $[\pi, f]_{\A \circ \D} = [\pi', f']_{\A \circ \D}$, so by naturality:
\[
\mathbf{U}[\pi, f]_\Pi = \mathbf{F}_\A[\pi, f]_\A
= \mathbf{F}_{\A \circ \D}[\pi, f]_{\A \circ \D}
= \mathbf{F}_{\A \circ \D}[\pi', f']_{\A \circ \D}
= \mathbf{U}[\pi', f']_\Pi.
\]
Hence $\mathbf{U}$ is well defined. The uniqueness of $\mathbf{U}$ is straightforward.
\endproof

\section{Language of a topos}

So far, we have constructed a quotient of spans that contains all indeterminate morphisms in a universal manner. In this section, we show that for a topos \(\T\), the category \(\mathsf{Span}_\Pi(\T,\Pi)\) can be regarded as a coherent system in which the internal language of the topos \(\T\) can be expressed. In our representation of this language, objects of \(\T\) are interpreted as types, and morphisms of the form \([!_A,f]_\Pi:1 \rightarrow B\) are interpreted as terms of type \(B \in \T\).

We denote a term \([!_A,f]_\Pi:1 \rightarrow B\) by \(\phi(x):1 \rightarrow B\), where \(x\) represents \([!_A,1_A]_\Pi:1 \rightarrow A\). Thus, \(x\) is a term of type \(A\), called a variable of type \(A\). Terms of type \(\Omega\) are referred to as formulas.

\begin{definition}
Let \(\alpha(x) = [!_A, f]_\Pi : 1 \rightarrow D\), \(\beta(y) = [!_B, g]_\Pi : 1 \rightarrow D\), and \(\gamma(z) = [!_C, h]_\Pi : 1 \rightarrow \mathbf{P}D\). Then:

\begin{itemize}
    \item \(\alpha(x) = \beta(y)\) is the formula
    \(
    \xymatrix{
        1 \ar[rr]^{\langle \alpha, \beta \rangle} && D \times D \ar[rr]^{[1, \delta_D]_\Pi} && \Omega
    }
    \)

    \item \(\alpha \varepsilon \gamma\) is the formula
    \(
    \xymatrix{
        1 \ar[rr]^{\langle \alpha, \gamma \rangle} && D \times \mathbf{P}D \ar[rr]^{[1, ev]_\Pi} && \Omega
    }
    \)
\end{itemize}

For formulas \(\phi(x) = [!_A, f]: 1 \rightarrow \Omega\) and \(\psi(y) = [!_B, g]: 1 \rightarrow \Omega\), define:

\begin{itemize}
    \item \(\phi \wedge \psi\) as
    \(
    \xymatrix{
        1 \ar[rr]^{\langle \phi, \psi \rangle} && \Omega \times \Omega \ar[rr]^{[1, \wedge]_\Pi} && \Omega
    }
    \)

    \item \(\phi \vee \psi\) as
    \(
    \xymatrix{
        1 \ar[rr]^{\langle \phi, \psi \rangle} && \Omega \times \Omega \ar[rr]^{[1, \vee]_\Pi} && \Omega
    }
    \)

    \item \(\phi \implies \psi\) as
    \(
    \xymatrix{
        1 \ar[rr]^{\langle \phi, \psi \rangle} && \Omega \times \Omega \ar[rr]^{[1, \implies]_\Pi} && \Omega
    }
    \)

    \item \(\mathrm{not}\ \phi\) as
    \(
    \xymatrix{
        1 \ar[rr]^{\phi} && \Omega \ar[rr]^{[1, \mathrm{not}]_\Pi} && \Omega
    }
    \)

    \item \(\forall \phi(x) = [1, \forall_A \tilde{f}]\)

    \item \(\exists \phi(x) = [1, \exists_A \tilde{f}]\)

    where \(\forall_A\) is the right adjoint and \(\exists_A\) is the left adjoint to \(\mathbf{P}(!_A): \Omega \rightarrow \mathbf{P}(A)\), and \(\tilde{f}\) is obtained from the diagram:
    \[
    \xymatrix{
        \mathbf{P}(A) \times A \ar[rr]^{ev} && \Omega \\
        1 \times A \ar[u]^{\tilde{f} \times 1} \ar[rr] && A \ar[u]_{f}
    }
    \]

    \item For \(\phi(x) = [1, f][!_A, 1]: 1 \rightarrow A \rightarrow \Omega\), the expression \(\{x \in A: \phi(x)\}\) is defined as the unique morphism obtained from the diagram:
    \[
    \xymatrix{
        \mathbf{P}A \times A \ar[rr]^{[1, ev]_\Pi} && \Omega \\
        1 \times A \ar[u]^{\{x \in A : \phi(x)\} \times 1} \ar[urr]_{[1, f]_\Pi} &&
    }
    \]
\end{itemize}
\end{definition}

\begin{proposition}
\(\forall \phi(x)\) and \(\exists \phi(x)\) are well defined.
\end{proposition}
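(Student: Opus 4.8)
The plan is to read \emph{well defined} in the only non-trivial sense available here: independence of the span chosen to represent the term $\phi(x)=[!_A,f]_\Pi:1\to\Omega$. So suppose $(!_A,f)\sim_\Pi(!_{A'},f')$; I must show $[1,\forall_A\tilde f]_\Pi=[1,\forall_{A'}\widetilde{f'}]_\Pi$ and $[1,\exists_A\tilde f]_\Pi=[1,\exists_{A'}\widetilde{f'}]_\Pi$. Since $\mathbf Q$ is a functor it sends equal morphisms of $\T$ to equal morphisms, so it is enough to establish the equalities $\forall_A\tilde f=\forall_{A'}\widetilde{f'}$ and $\exists_A\tilde f=\exists_{A'}\widetilde{f'}$ inside $\T$. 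As $\sim_\Pi$ is the equivalence relation generated by $\le_\Pi$, I would first reduce to a single generating step: assume there is a projection $\pi:A\to A'$ with $!_A=!_{A'}\pi$ and $f=f'\pi$ (the case $A=A'$ being immediate); concretely $A\cong E\times A'$ and $\pi$ is a product projection, hence a (split) epimorphism. After this reduction the whole statement collapses to the single claim that, for such $\pi$, one has $\forall_A\circ\mathbf P(\pi)=\forall_{A'}$ and $\exists_A\circ\mathbf P(\pi)=\exists_{A'}$ as maps $\mathbf P A'\to\Omega$, together with the identity $\tilde f=\mathbf P(\pi)(\widetilde{f'})$.

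The identity $\tilde f=\mathbf P(\pi)(\widetilde{f'})$ I would get by unwinding the defining pullback square for $\tilde f$: the subobject of $A$ classified by $f=f'\pi$ is the pullback along $\pi$ of the subobject of $A'$ classified by $f'$, and transporting through $\mathbf{Sub}(X)\cong\T(X,\Omega)\cong\T(1,\mathbf P X)$ this says exactly $\tilde f=\mathbf P(\pi)(\widetilde{f'})$; this is just the compatibility of the power-object functor with reindexing. For the claim on the adjoints I would use $!_A=!_{A'}\circ\pi$, whence $\mathbf P(!_A)=\mathbf P(\pi)\circ\mathbf P(!_{A'})$, and pass to adjoints. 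Writing $\exists_\pi\dashv\mathbf P(\pi)\dashv\forall_\pi$ for the adjoints of reindexing along $\pi$ (which exist since $\T$ is a topos), the right adjoint of the composite is $\forall_A=\forall_{A'}\circ\forall_\pi$ and the left adjoint is $\exists_A=\exists_{A'}\circ\exists_\pi$. Hence $\forall_A\circ\mathbf P(\pi)=\forall_{A'}\circ(\forall_\pi\circ\mathbf P(\pi))$ and $\exists_A\circ\mathbf P(\pi)=\exists_{A'}\circ(\exists_\pi\circ\mathbf P(\pi))$, so it remains only to see that $\forall_\pi\circ\mathbf P(\pi)=\mathrm{id}=\exists_\pi\circ\mathbf P(\pi)$, i.e. that $\mathbf P(\pi)$ is fully faithful (an internal order-embedding). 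This holds because $\pi$ is a split epimorphism: a section $\sigma$ of $\pi$ gives $\mathbf P(\sigma)\circ\mathbf P(\pi)=\mathrm{id}$, so $\mathbf P(\pi)$ is a split monomorphism into the internal poset $\Omega^{A'}$, and combined with the unit $1\le\forall_\pi\mathbf P(\pi)$ and counit $\mathbf P(\pi)\forall_\pi\le 1$ (resp. the dual pair for $\exists_\pi$) one gets equalities in $\Omega$. Feeding the generating steps along the finite zig-zag witnessing $(!_A,f)\sim_\Pi(!_{A'},f')$ provided by Lemma~\ref{describe for stab class rel} then yields $\forall_A\tilde f=\forall_{A'}\widetilde{f'}$ and $\exists_A\tilde f=\exists_{A'}\widetilde{f'}$, and applying $\mathbf Q$ finishes the argument.

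The step I expect to be the main obstacle is precisely the ``$\mathbf P(\pi)$ fully faithful'' input: it is exactly what forces $\pi$ to behave like an epimorphism, and it genuinely fails if $\Pi$ is allowed to contain projections that forget an object of empty support (for instance $!_\emptyset:\emptyset\to 1$), so one must either restrict to projections that are (split) epimorphisms or check that such degenerate witnesses never occur for the spans representing formulas. The remaining delicate point is purely bookkeeping: presenting an arbitrary $\sim_\Pi$-relation as a finite zig-zag of single-projection steps via Lemma~\ref{describe for stab class rel}, and verifying that along each step both legs really produce the same $\widetilde{(-)}$ after reindexing; this is routine given the naturality of the exponential transpose.
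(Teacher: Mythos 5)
Your proposal follows essentially the same route as the paper's proof: reduce to a single generating projection $\pi$, establish $\widetilde{f\pi}=\mathbf P(\pi)\tilde f$ by naturality of the exponential transpose, decompose $\forall_{A\times C}=\forall_A\forall_\pi$ (resp.\ $\exists_{A\times C}=\exists_A\exists_\pi$) from $!_{A\times C}=!_A\pi$, and conclude via $\forall_\pi\mathbf P(\pi)=1=\exists_\pi\mathbf P(\pi)$. The one place you diverge is the justification of this last identity: you obtain it from a section of $\pi$, but a product projection $E\times A'\to A'$ is split epi only when there exists a morphism $A'\to E$, so that argument is simply not available for arbitrary members of $\Pi$. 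The paper instead computes $\forall_\pi(d\times 1)=d$ from a pullback-complement diagram and quotes \cite[Lemma 2.3.6]{john} for the $\exists$ case. That said, the caveat you raise at the end is genuine and is not resolved by the paper either: for a projection along an object of empty support (e.g.\ $C=\emptyset$, where $d\times 1_C$ is the maximal subobject of $A\times C$) one has $\forall_\pi\mathbf P(\pi)\neq 1$, so both arguments implicitly require the discarded factor to be well-supported. Your explicit handling of the zig-zag reduction via Lemma~\ref{describe for stab class rel} is a bookkeeping step the paper leaves implicit; otherwise the two proofs coincide.
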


\proof
Let the left diagram below be given with \(\pi \in \Pi\). We aim to show \(\forall_A \tilde{f} = \forall_{A \times C} \widetilde{f\pi}\). The right diagram implies \(\widetilde{f\pi} = \mathbf{P}(\pi) \tilde{f}\).

\[
\xymatrix{
&& A \times C \ar[lld] \ar[rrd]^{f\pi} \ar[dd]_{\pi} && \\
1 &&&& \Omega \\
&& A \ar[llu] \ar[rru]_{f} &&
}
\quad
\xymatrix{
\mathbf{P}(A \times C) \times (A \times C) \ar[rr] && \Omega \\
\mathbf{P}A \times (A \times C) \ar[u]_{\mathbf{P}(\pi) \times 1} \ar[rr]^{1 \times \pi} && \mathbf{P}A \times A \ar[u]_{ev} \\
1 \times (A \times C) \ar[u]_{\tilde{f} \times 1} \ar[rr]_{1 \times \pi} && 1 \times A \ar[u]_{\tilde{f} \times 1} \ar@/_2.5pc/[uu]_{f}
}
\]

The adjunction diagram below, together with \(!_{{A \times C}} = !_A \pi\), implies \(\forall_{A \times C} = \forall_A \forall_\pi\).

\[
\xymatrix{
\mathbf{P}(A \times C) \ar@<1ex>[rr]^{\forall_\pi} && \mathbf{P}A \ar@<1ex>[ll]^{\mathbf{P}(\pi)} \ar@<1ex>[rr]^{\forall_A} && \Omega \ar@<1ex>[ll]^{\mathbf{P}(!_A)}
}
\]

The following pullback and pullback-complement squares illustrate the external forms of \(\mathbf{P} \pi\) and \(\forall_\pi\), respectively. From the right square, we get \(\pi^{-1} d = d \times 1\), which implies \(\forall_\pi \mathbf{P} \pi = 1\). Therefore,
\(
\forall_{A \times C} \widetilde{f\pi} = \forall_A \forall_\pi \mathbf{P} \pi \tilde{f} = \forall_A \tilde{f},
\)
so \(\forall \phi(x)\) is well defined.

\[
\xymatrix{
D \times C \ar[rr]^{d \times 1} \ar[d] \ar@{}[rrd]|{p.b.} && A \times C \ar[d]^{\pi} \\
D \ar[rr]_{d} && A \times 1
}
\quad
\xymatrix{
D \times C \ar[rr]^{d \times 1} \ar[d]_{1 \times !_C} \ar@{}[rrd]|{p.b.c.} && A \times C \ar[d]^{1 \times !_C = \pi} \\
D \times 1 \ar[rr]_{d \times 1} && A \times 1
}
\]

Using \cite[Lemma 2.3.6]{john}, we obtain \(\exists_C \mathbf{P} \pi = 1\). Hence,
\(
\exists_{A \times C} \widetilde{f\pi} = \exists_A \exists_C \mathbf{P} \pi \tilde{f} = \exists_A \tilde{f},
\)
so \(\exists \phi(x)\) is also well defined.
\endproof

\section{Logical relations on span categories}

In \cite{hsty}, compatible relations on span categories, in which their quotients are allegories, are studied, and it is shown that for a pullback stable factorization system \((\E,\M)\) in a finitely complete category \(\C\), \(\mathsf{Rel}(\C,\E,\M)\cong \mathsf{Span}_\E(\C)\) \cite[Theorem 2.3]{hsty}.
For a regular category \(\C\), with \(\E=\mathbf{RegEpi}(\C)\), \(\M=\mathbf{Mono}(\C)\), it is well known that \(\mathsf{Rel}(\C,\E,\M)\cong \mathsf{Span}_\E(\C)\) is a tabular allegory and \(\mathsf{Map}(\mathsf{Span}_\E(\C))\cong \C\). This motivates us to investigate which quotients of \(\mathsf{Span}(\T)\), for a topos \(\T\), are toposes.
In \cite{john}, it is shown that maps of a power allegory form a topos. Inspired by this, we investigate conditions on a compatible relation \(\sim\) to make \(\mathsf{Span}_\sim(\T)\) a power allegory.

Allegories were presented for the first time in \cite{freyd} as categories which reflect properties that hold in the category of relations.

\begin{definition}
	An \emph{allegory} is a locally ordered $2$-category $\A$ whose hom-posets
	have binary intersections, equipped with an anti-involution $\phi \mapsto \phi^{\circ}$ and satisfying the modular law
	\[
	\psi\phi \cap \chi \leq (\psi \cap \chi\phi^{\circ})\phi,
	\]
	whenever this makes sense.
\end{definition}
\begin{definition}
In an allegory, a morphism is called \emph{map} if \(1\le r^\circ \cdot r\) and \(r\cdot r^\circ\le 1\). The subcategory of maps of an allegory \(\A\) is denoted by $\mathsf{MAP}(\A)$.
\end{definition}
A power allegory is a division allegory with some extra properties. First, we give the definition of a division allegory and then the definition of a power allegory. See \cite{john} for more information.
\begin{definition}[\cite{john}]
	An allegory $A$ is called a \emph{division allegory} if, for each $\phi:A\rightarrow B$ and object $C$, the order preserving map $(-)\phi:\A(B,C)\longrightarrow \A(A,C)$ has a right adjoint, which we call right division by $\phi$ and denote $(-)/\phi$.
\end{definition}
Of course, the anti-involution ensures that if we have right division we also have left division $\phi\setminus (-)$ (right adjoint to $\phi(-)$).
We write $(\phi|\psi)$ for
$$(\phi\setminus \psi)\cap (\psi\setminus \phi)^\circ.$$
\begin{definition}[\cite{john}]
	A division allegory $\A$ is called a \emph{power allegory} if there is an operation assigning to each object $A$ a morphism $\in_A:PA\rightarrow A$ satisfying $(\in_A|\in_A)=1_{PA}$ and
	$$1_B\le (\phi\setminus\in_A)(\in_A\setminus\phi)$$
	for any $\phi:B\rightarrow A$.
\end{definition}

Every topos has an \((\mathbf{Epi},\mathbf{Mono})\) factorization. In the following, we denote a topos as \(\T\) and its epi-mono factorization as \((\E,\M)\). Utilizing \((\E,\M)\), we define a kind of compatible relation such that the quotient arising from it will be shown to be a power allegory. 

\begin{definition}
	For a topos $\T$, a compatible relation $\sim$ on $\mathsf{Span}(\T)$ is called \emph{logical} if:
	\begin{itemize}
		\item
		$\E\subseteq \sim$
		\item
		for spans $(f,g),(h,k):A\rightarrow C$ and a morphism $a:A\rightarrow B$
		\[
		(f,g)\sim(h,k) \implies (\pi_1\forall_{a\times 1}m,\pi_2\forall_{a\times 1}m)\sim(\pi_1\forall_{a\times 1}n,\pi_2\forall_{a\times 1}n)
		\]
		where \( m \) and \( n \) are the \( \mathcal{M} \)-parts of the morphisms \( \langle f, g \rangle : D \to A \times C \) and \( \langle h, k \rangle : D' \to A \times C \), respectively, where \( \langle f, g \rangle \) and \( \langle h, k \rangle \) denote the unique morphisms in \(\T\) induced by the universal property of the product, and \( D \) and \( D' \) are the domains of \( f \) and \( h \), respectively\footnote{We use angle brackets to denote the unique morphism resulting from a product diagram. Note that these morphisms are not spans.}.

	\end{itemize}
\end{definition}

First, we show that $\mathsf{Span}_\sim(\T)$ is a division allegory, for a logical relation $\sim$. Since $\E\subseteq \sim$, the mapping $Q:\mathsf{Span}_\E(\T)\longrightarrow \mathsf{Span}_\sim(\T)$, defined by \(Q([f,g]_\E)=[f,g]_\sim\), is a representation of allegories, meaning that $Q$ preserves $^\circ$ and $\cap$.

\begin{theorem}
	For a topos $\T$, $\mathsf{Span}_\E(\T)$ is a division allegory.
\end{theorem}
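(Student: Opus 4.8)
The plan is to recognise $\mathsf{Span}_\E(\T)$ as the allegory $\mathsf{Rel}(\T)$ of relations in $\T$ and then to extract right divisions from the internal logic of the topos. First I would carry out the identification with $\mathsf{Rel}(\T)$. Applying the $(\E,\M)$-factorization to $\langle f,g\rangle\colon D\to A\times B$ sends a span $A\xleftarrow{f}D\xrightarrow{g}B$ to the jointly monic span underlying a subobject $\phi\rightarrowtail A\times B$, and one checks that every span is $\sim_\E$-equivalent to this jointly monic one, while two jointly monic spans are $\sim_\E$-equivalent precisely when they name the same subobject of $A\times B$; this is where the pullback-stability of epimorphisms in $\T$ and the bridging of a span with the endospan $(e,e)$ of an epimorphic span morphism $e$ are used. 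Under this identification composition of spans becomes relational composition, $(-)^{\circ}$ becomes converse, and $[f,g]_\E\le[h,k]_\E$ becomes inclusion of subobjects; since $\T$ is regular, $\mathsf{Rel}(\T)$ is an allegory (the hom-posets have intersections and the modular law holds by the usual relational calculus). So it remains to produce right division.

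Fix $\phi\colon A\to B$, represented by a subobject $\phi\rightarrowtail A\times B$, and an object $C$; I must exhibit a right adjoint $(-)/\phi\colon\mathsf{Rel}(A,C)\to\mathsf{Rel}(B,C)$ to $(-)\phi$. For $\psi\rightarrowtail A\times C$, let $\pi_{AB},\pi_{AC},\pi_{BC}$ be the three projections out of $A\times B\times C$, write $\phi^{*}=\pi_{AB}^{*}\phi$ and $\psi^{*}=\pi_{AC}^{*}\psi$ in $\mathrm{Sub}(A\times B\times C)$, and set
$$\psi/\phi\ :=\ \forall_{\pi_{BC}}\bigl(\phi^{*}\Rightarrow\psi^{*}\bigr)\ \rightarrowtail\ B\times C ,$$
where $\Rightarrow$ is Heyting implication in the Heyting algebra $\mathrm{Sub}(A\times B\times C)$ and $\forall_{\pi_{BC}}$ is the right adjoint of $\pi_{BC}^{*}$; both exist because a topos is a Heyting category. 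For $\theta\rightarrowtail B\times C$ one then chases
$$\begin{array}{rcl}
\theta\phi\le\psi &\iff& \exists_{\pi_{AC}}\bigl(\phi^{*}\wedge\pi_{BC}^{*}\theta\bigr)\le\psi\\
&\iff& \phi^{*}\wedge\pi_{BC}^{*}\theta\le\psi^{*}\\
&\iff& \pi_{BC}^{*}\theta\le\bigl(\phi^{*}\Rightarrow\psi^{*}\bigr)\\
&\iff& \theta\le\psi/\phi ,
\end{array}$$
using in turn that in a regular category relational composition is computed as $\exists_{\pi_{AC}}$ of the binary intersection $\phi^{*}\wedge\pi_{BC}^{*}\theta$ (here Beck--Chevalley for the projection squares enters), the adjunctions $\exists_{\pi_{AC}}\dashv\pi_{AC}^{*}$ and $\pi_{BC}^{*}\dashv\forall_{\pi_{BC}}$, and the Heyting adjunction $(-)\wedge\phi^{*}\dashv(\phi^{*}\Rightarrow-)$. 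This exhibits $(-)/\phi$ as right adjoint to $(-)\phi$, i.e.\ as right division by $\phi$, so $\mathsf{Span}_\E(\T)$ is a division allegory.

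I expect the main obstacle to be the first step: verifying that, after passing to the quotient by the $\E$-endospans, the pullback-and-image recipe for span composition descends exactly to relational composition, which rests on the compatibility of the $(\E,\M)$-factorization with pullback in a topos and on the description of $\sim_\E$ by common refinements with epimorphic legs. Everything after that is a formal manipulation in the subobject fibration; in fact $\mathsf{Rel}(\T)$ is a power allegory in the sense recalled above, so one may alternatively invoke \cite{freyd} or \cite{john} for the allegory and division (indeed power-allegory) structure and keep here only the translation through the span presentation.
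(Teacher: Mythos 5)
Your argument is correct, and it arrives at the same structure the paper asserts, but it does substantially more of the work in-line. The paper's proof is essentially a citation: it appeals to \cite[Theorem 3.4.2]{john} together with \cite[Theorem 4.2]{hsty} (the latter supplying the identification of $\mathsf{Span}_\E(\T)$ with $\mathsf{Rel}(\T)$ as an allegory) and merely records the resulting division formula $[h,k]_\E/[f,g]_\E=[\pi_1a,\pi_2a]_\E$ with $a=\forall_{g\times 1}(f\times 1)^*(m_{\langle h,k\rangle})$, i.e.\ right division computed through the tabulating span $(f,g)$ of the divisor. You instead verify the identification $\mathsf{Span}_\E(\T)\cong\mathsf{Rel}(\T)$ by hand (via $(\E,\M)$-factorization of $\langle f,g\rangle$, plus the observation that an epimorphic span morphism between jointly monic spans is invertible), and you construct division directly in the subobject fibration as $\forall_{\pi_{BC}}(\phi^{*}\Rightarrow\psi^{*})$, checking the adjunction by a four-step chain of equivalences; that chain is sound, resting on $\exists\dashv(-)^*\dashv\forall$ and the Heyting adjunction. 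The two formulas agree: unwinding $\theta\phi\le\psi$ through the tabulation of $\phi$ reduces it to $(g\times 1)^*\theta\le(f\times 1)^*\psi$, which yields exactly the paper's $\forall_{g\times 1}(f\times 1)^*$ expression, while your Heyting-implication form is what one gets by treating $\phi$ as a subobject of $A\times B$ rather than via its tabulation. Your route buys a self-contained proof (modulo the standard fact, which you cite rather than prove, that $\mathsf{Rel}$ of a regular category satisfies the allegory axioms); the paper's route buys brevity and, more importantly, the explicit tabulation-based formula for $(-)/[f,g]_\E$ in terms of $\forall$ applied to an $\M$-part, which is precisely the shape needed in the subsequent theorem showing that a logical relation $\sim$ makes $\mathsf{Span}_\sim(\T)$ a division allegory. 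If you adopt your formula, you would need to restate that compatibility argument in terms of $\phi^{*}\Rightarrow\psi^{*}$.
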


\proof
	Utilizing \cite[Theorem 3.4.2]{john} and \cite[Theorem 4.2]{hsty}, $\mathsf{Span}_\E(\T)$ is a division allegory, where \([h,k]_\E/[f,g]_\E := [\pi_1a,\pi_2a]_\E\), in which $a=\forall_{g\times 1}(f\times 1)^*(m_{\langle h,k\rangle})$, and $m_{\langle h,k\rangle}$ is the mono part of $\langle h,k\rangle$.
\endproof

\begin{theorem}
	For a logical relation $\sim$, $\mathsf{Span}_\sim(\T)$ is a division allegory and
	\[
	Q((-)/[f,g]_\E)=(-)/[f,g]_\sim.
	\]
\end{theorem}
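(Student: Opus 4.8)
The plan is to transport the right division of $\mathsf{Span}_\E(\T)$, furnished by the previous theorem, along the representation $Q$. Recall (as already used in the paragraph preceding the theorem) that $\mathsf{Span}_\sim(\T)$ is an allegory and that $Q$ is the identity on objects, surjective on morphisms, commutes with $\cap$ and $(-)^{\circ}$, and is therefore locally monotone. Hence, to obtain a division allegory structure it suffices, for a fixed span $\phi=[f,g]_\E\colon A\to B$ and object $C$, to produce a right adjoint $\bar L$ to the precomposition map $(-)\bar\phi\colon\mathsf{Span}_\sim(\T)(B,C)\to\mathsf{Span}_\sim(\T)(A,C)$, where $\bar\phi:=Q\phi=[f,g]_\sim$, and to check that $\bar L$ equals $Q$ applied to $(-)/[f,g]_\E$, which is exactly the displayed identity.

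The crux is that the operator $L:=(-)/[f,g]_\E$ on the hom-posets of $\mathsf{Span}_\E(\T)$ respects $\sim$: if $(h,k)\sim(h',k')$ then $[h,k]_\E/[f,g]_\E\sim[h',k']_\E/[f,g]_\E$. I would prove this by factoring the division formula $a=\forall_{g\times 1}(f\times 1)^{*}(m_{\langle h,k\rangle})$ through two operations, each evidently $\sim$-preserving. First, a standard image-factorization argument in the regular category $\T$ shows that $(f\times 1)^{*}(m_{\langle h,k\rangle})$ is, up to vertical isomorphism, the $\M$-part of the composite span $[h,k]_\E\circ[1,f]_\E\colon D_\phi\to C$, where $D_\phi$ is the vertex of $\phi$ and $[1,f]\colon D_\phi\to A$ is the graph span $D_\phi\xleftarrow{1}D_\phi\xrightarrow{f}A$; since horizontal composition preserves $\sim$, passing from $(h,k)$ to this composite preserves $\sim$. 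Second, the remaining operation --- take the $\M$-part of a span $D_\phi\to C$, apply $\forall_{g\times 1}$, and then project --- is precisely the one appearing in the second clause of the definition of a logical relation, applied with the morphism $g\colon D_\phi\to B$; hence it too preserves $\sim$. Composing the two steps gives the claim.

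Granting the claim, define $\bar L\colon\mathsf{Span}_\sim(\T)(A,C)\to\mathsf{Span}_\sim(\T)(B,C)$ by $\bar L(Q\psi):=Q(L\psi)$; this is well defined by the claim, and monotone because $Q$ and the right adjoint $L$ both preserve $\cap$. Then $(-)\bar\phi\dashv\bar L$: writing $R=(-)\phi$, the adjunction $R\dashv L$ in $\mathsf{Span}_\E(\T)$ supplies the unit $1\le LR$ and counit $RL\le 1$; applying the surjections $Q$ to these inequalities and using functoriality of $Q$ together with $Q\circ L=\bar L\circ Q$ yields $1\le\bar L\,((-)\bar\phi)$ and $((-)\bar\phi)\,\bar L\le 1$, and in a locally ordered $2$-category two such inequalities between monotone maps constitute an adjunction. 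Thus $\bar L$ is the sought right adjoint, $\mathsf{Span}_\sim(\T)$ is a division allegory, and division is computed by $Q$ of the $\mathsf{Span}_\E$-formula, i.e.\ $Q\big((-)/[f,g]_\E\big)=(-)/[f,g]_\sim$. Finally, if $[f,g]_\E$ and $[f',g']_\E$ have the same image under $Q$, the two constructions give two right adjoints to the same map, hence agree; so $(-)/[f,g]_\sim$ depends only on $[f,g]_\sim$.

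I expect the only genuine obstacle to be the first operation in the middle paragraph: identifying $(f\times 1)^{*}(m_{\langle h,k\rangle})$ with the image of $\langle p,kq\rangle$, where $(p,q)$ are the legs of the pullback of $f$ along $h$. This comes down to observing that the comparison map from that pullback to the pullback of $\mathrm{Im}\langle h,k\rangle$ along $f\times 1$ is itself a pullback of the cover $D_\psi\twoheadrightarrow\mathrm{Im}\langle h,k\rangle$, hence epic by stability of epimorphisms under pullback in a topos; I would present this cleanly rather than as an opaque diagram chase. Everything else is formal $2$-categorical bookkeeping.
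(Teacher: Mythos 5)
Your proposal follows essentially the same route as the paper: define $(-)/[f,g]_\sim$ as $Q$ applied to the $\mathsf{Span}_\E$-division and verify the adjunction by pushing the unit and counit inequalities through the monotone representation $Q$. The only difference is that you spell out the well-definedness step --- factoring $\forall_{g\times 1}(f\times 1)^{*}(m_{\langle h,k\rangle})$ through composition with the graph span of $f$ followed by the operation in the second clause of the definition of a logical relation --- which the paper compresses into the single sentence ``It follows from the definition of logical relation that this definition is well-defined,'' so your version is a correct and more detailed rendering of the same argument.
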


\proof
Let $(-)/[f,g]_\sim := Q((-)/[f,g]_\E)$. It follows from the definition of logical relation that this definition is well-defined. We have
	\[
	[h,k]_\sim = Q [h,k]_\E \le Q(([h,k]_\E [f,g]_\E)/[f,g]_\E) = ([h,k]_\sim [f,g]_\sim)/[f,g]_\sim
	\]
and
	\[
	([r,s]_\sim/[f,g]_\sim)[f,g]_\sim = Q([r,s]_\E/[f,g]_\E) Q[f,g]_\E
	\]
    \[= Q(([r,s]_\E/[f,g]_\E)[f,g]_\E) \le Q[r,s]_\E = [r,s]_\sim.
	\]
Therefore, $(-)/[f,g]_\sim$ is right adjoint to $(-)[f,g]_\sim$.
\endproof

\begin{theorem}
	$\mathsf{Span}_\E(\T)$ is a power allegory.
\end{theorem}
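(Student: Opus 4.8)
The plan is to use that $\mathsf{Span}_\E(\T)$ is already known to be a division allegory and to promote it to a power allegory by exhibiting, for each object $A$, the \emph{membership span}
$$\in_A\;=\;[\,\pi_{PA}\circ\mu_A,\ \pi_A\circ\mu_A\,]_\E\colon PA\longrightarrow A,$$
where $\mu_A\colon{\in_A}\hookrightarrow PA\times A$ is the subobject of $\T$ classified by $ev_A\colon PA\times A\to\Omega$ (this is the span displayed just before the statement), and then to verify the two power-allegory axioms, namely $(\in_A|\in_A)=1_{PA}$ and $1_B\le(\phi\setminus\in_A)(\in_A\setminus\phi)$ for every $\phi\colon B\to A$. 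Throughout I would use that $\mathsf{Span}_\E(\T)$ is order-enriched with $\cap$ and with the two division operations coming from the formula recalled above, and that it is an allegory so that maps and the identities $\phi\mapsto\phi^{\circ}$ are available.

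The device for the second axiom is the transpose. For a morphism $\phi=[h,k]_\E\colon B\to A$, let $R\hookrightarrow B\times A$ be its $\M$-part and let $\ulcorner\phi\urcorner\colon B\to PA$ be the unique morphism of $\T$ with $(\ulcorner\phi\urcorner\times 1_A)^*(\mu_A)=R$. Writing $[1_B,\ulcorner\phi\urcorner]_\E$ for the graph of $\ulcorner\phi\urcorner$ in $\mathsf{Span}_\E(\T)$, two facts do all the work: first, $[1_B,\ulcorner\phi\urcorner]_\E$ is a map, so $[1_B,\ulcorner\phi\urcorner]_\E^{\circ}\,[1_B,\ulcorner\phi\urcorner]_\E\ge 1_B$ and $[1_B,\ulcorner\phi\urcorner]_\E\,[1_B,\ulcorner\phi\urcorner]_\E^{\circ}\le 1_{PA}$; second, a routine pullback chase from the defining property of $\mu_A$ gives $\in_A\cdot[1_B,\ulcorner\phi\urcorner]_\E=\phi$. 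Granting these, axiom (b) is formal: from the adjunction $\in_A(-)\dashv\in_A\setminus(-)$ and $\in_A\cdot[1_B,\ulcorner\phi\urcorner]_\E=\phi$ we get $[1_B,\ulcorner\phi\urcorner]_\E\le\in_A\setminus\phi$, and from $\phi(-)\dashv\phi\setminus(-)$ together with $\phi\cdot[1_B,\ulcorner\phi\urcorner]_\E^{\circ}=\in_A\,[1_B,\ulcorner\phi\urcorner]_\E\,[1_B,\ulcorner\phi\urcorner]_\E^{\circ}\le\in_A$ we get $[1_B,\ulcorner\phi\urcorner]_\E^{\circ}\le\phi\setminus\in_A$; composing and using monotonicity of composition gives $(\phi\setminus\in_A)(\in_A\setminus\phi)\ge[1_B,\ulcorner\phi\urcorner]_\E^{\circ}[1_B,\ulcorner\phi\urcorner]_\E\ge 1_B$.

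For axiom (a), the inequality $1_{PA}\le(\in_A|\in_A)$ is automatic: the unit of $\in_A(-)\dashv\in_A\setminus(-)$ gives $\in_A\setminus\in_A\ge 1_{PA}$, hence $(\in_A\setminus\in_A)^{\circ}\ge 1_{PA}$, and hence so is their intersection. The reverse inequality $(\in_A\setminus\in_A)\cap(\in_A\setminus\in_A)^{\circ}\le 1_{PA}$ is where the power-object structure is genuinely used. Using the formula for right division recalled above, namely $[h,k]_\E/[f,g]_\E=[\pi_1a,\pi_2a]_\E$ with $a=\forall_{g\times 1}(f\times 1)^*(m_{\langle h,k\rangle})$ and $m_{\langle h,k\rangle}$ the mono part of $\langle h,k\rangle$, together with the anti-involution to pass from left to right division, one computes $\in_A\setminus\in_A$ as a subobject of $PA\times PA$: it comes out as the relation relating $u$ to $v$ exactly when every element of $u$ is an element of $v$, i.e. the ``inclusion'' relation on $PA$. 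Intersecting with its opposite yields the relation relating $u$ to $v$ exactly when $u$ and $v$ have the same elements, and this coincides with the diagonal $1_{PA}$ precisely because $\mu_A$ is a monomorphism, i.e. by extensionality of the power object of $\T$.

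I expect this last verification to be the main obstacle: carrying out the pullback-and-$\forall$ computation that identifies $\in_A\setminus\in_A$ with the inclusion relation on $PA$, and then invoking monicity of $\mu_A$ to collapse $(\in_A|\in_A)$ to $1_{PA}$. A shortcut that sidesteps recomputing the division is available: since $\T$ is a topos it has the $(\E,\M)$-factorisation and, by the description of $\sim_\E$ in Lemma \ref{describe for stab class rel}, the functor sending a morphism of $\T$ to its graph induces an isomorphism of division allegories $\mathsf{Span}_\E(\T)\cong\mathsf{Rel}(\T,\E,\M)$; transporting along this isomorphism the power-allegory structure of $\mathsf{Rel}(\T,\E,\M)$ recalled above yields the theorem at once, and one has only to observe that the transported $\in_A$ is the span displayed before the statement, which is immediate.
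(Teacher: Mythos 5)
Your closing ``shortcut'' is exactly the paper's proof: the authors simply observe that $\mathsf{Rel}(\T,\E,\M)\cong\mathsf{Span}_\E(\T)$, that $\mathsf{Rel}(\T,\E,\M)$ is a power allegory because $\T$ is a topos, and that the transported $\in_A$ is the displayed span; so on that route you and the paper coincide. The direct verification that occupies the bulk of your proposal is a genuinely more hands-on alternative, and your treatment of the second axiom is complete and correct (the graph $[1_B,\ulcorner\phi\urcorner]_\E$ is a map, $\in_A\cdot[1_B,\ulcorner\phi\urcorner]_\E=\phi$, and the two adjunctions do the rest). However, as you yourself flag, the first axiom $(\in_A|\in_A)\le 1_{PA}$ is only sketched, not proved: you would still have to carry out the $\forall_{g\times 1}(f\times 1)^*$ computation identifying $\in_A\setminus\in_A$ with the inclusion relation on $PA$, and the justification you give for the final collapse --- ``because $\mu_A$ is a monomorphism'' --- is not the right reason. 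Monicity of $\mu_A\hookrightarrow PA\times A$ is automatic for any relation; what extensionality actually requires is the \emph{uniqueness} clause in the universal property of the power object (two generalized elements $u,v:X\to PA$ naming the same subobject of $X\times A$ are equal), which is a statement about $PA$, not about $\mu_A$. So if you intend the direct route, that step still needs to be filled in; if you intend the shortcut, you have reproduced the paper's argument and nothing is missing.
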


\proof
Since $\T$ is a topos, $\mathsf{Rel}(\T,\E,\M)$ is a power allegory with $\in_A:PA \rightarrow A$ defined as
\[
\xymatrix{
	& \in_A \ar[ld] \ar[rd] \ar[d] & \\
	PA & PA \times A \ar[l] \ar[r] & A
}
\]
By \cite[Theorem 2.3]{hsty}, we have $\mathsf{Rel}(\T,\E,\M) \cong \mathsf{Span}_\E(\T)$, and $\in_A:PA \rightarrow A$ in $\mathsf{Span}_\E(\T)$ is defined as in $\mathsf{Rel}(\T,\E,\M)$.
\endproof

\begin{theorem}
	For a logical relation $\sim$, $\mathsf{Span}_\sim(\T)$ is a power allegory and $\mathsf{Map}(\mathsf{Span}_\sim(\T))$ is a topos.
\end{theorem}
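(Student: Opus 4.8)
The plan is to transport the power-allegory structure of $\mathsf{Span}_\E(\T)$ along the allegory representation $Q:\mathsf{Span}_\E(\T)\rightarrow\mathsf{Span}_\sim(\T)$, and then to invoke the Freyd--Scedrov theorem that the category of maps of a unitary tabular power allegory is a topos. Recall that $Q$ is bijective on objects, full (every morphism of $\mathsf{Span}_\sim(\T)$ has the form $[f,g]_\sim=Q[f,g]_\E$), and preserves $^\circ$, $\cap$, composition and identities; being $\cap$-preserving it is monotone, and by the two preceding theorems it preserves right division, hence also left division $\phi\setminus(-)=((-)^\circ/\phi^\circ)^\circ$ and the operation $(\phi|\psi)=(\phi\setminus\psi)\cap(\psi\setminus\phi)^\circ$. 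I would set $\in_A:=Q(\in_A):PA\rightarrow A$ in $\mathsf{Span}_\sim(\T)$, where on the right $\in_A$ is the membership relation of $\mathsf{Span}_\E(\T)\cong\mathsf{Rel}(\T,\E,\M)$.

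First I would check the two power-allegory axioms. Applying $Q$ to $(\in_A|\in_A)=1_{PA}$, which is valid in $\mathsf{Span}_\E(\T)$, gives $(\in_A|\in_A)=Q(1_{PA})=1_{PA}$ in $\mathsf{Span}_\sim(\T)$. For the second axiom, given $\phi:B\rightarrow A$ in $\mathsf{Span}_\sim(\T)$ I use fullness to write $\phi=Q\psi$; since $1_B\le(\psi\setminus\in_A)(\in_A\setminus\psi)$ holds in $\mathsf{Span}_\E(\T)$ and $Q$ is monotone and preserves $\setminus$ and composition,
$$1_B=Q(1_B)\le Q\bigl((\psi\setminus\in_A)(\in_A\setminus\psi)\bigr)=(\phi\setminus\in_A)(\in_A\setminus\phi).$$
Hence $\mathsf{Span}_\sim(\T)$ is a power allegory.

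Next I would verify that $\mathsf{Span}_\sim(\T)$ is unitary and tabular, transferring both properties from $\mathsf{Span}_\E(\T)\cong\mathsf{Rel}(\T,\E,\M)$ (which is unitary and tabular since $\T$ is a topos) along $Q$. For unitarity, the terminal object $1$ of $\T$ remains the unit: $Q$ sends the terminal map $!_A$ to a map $A\rightarrow 1$ (the conditions $1\le a^\circ a$ and $aa^\circ\le 1$ defining a map are preserved by the monotone representation $Q$), and $1_1$ is still the greatest element of $\mathsf{Span}_\sim(\T)(1,1)$ because every such morphism is $Q\psi$ with $\psi\le 1_1$ in $\mathsf{Span}_\E(\T)$, whence $Q\psi\le Q(1_1)=1_1$. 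For tabularity, I write an arbitrary morphism of $\mathsf{Span}_\sim(\T)$ as $Q\psi$, choose a tabulation $\psi=g f^\circ$ of $\psi$ in $\mathsf{Span}_\E(\T)$ with $f,g$ maps and $f^\circ f\cap g^\circ g=1$, and apply $Q$: then $Q\psi=(Qg)(Qf)^\circ$ with $Qf,Qg$ maps and $(Qf)^\circ(Qf)\cap(Qg)^\circ(Qg)=Q(f^\circ f\cap g^\circ g)=1$, a tabulation of $Q\psi$.

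Having shown that $\mathsf{Span}_\sim(\T)$ is a unitary tabular power allegory, the cited theorem of \cite{freyd} gives that $\mathsf{Map}(\mathsf{Span}_\sim(\T))$ is a regular category with power objects, that is, an elementary topos, the power object of $A$ being $PA$. The step I expect to require the most care is tabularity: one must confirm that $Q$ really sends tabulations to tabulations, which comes down to the small observation that a monotone representation preserving $^\circ$, $\cap$, composition and identities preserves both the property of being a map and joint monicness; checking unitarity needs similar but lighter bookkeeping. A secondary point is to make sure the paper's axiomatization of power allegory matches the hypotheses of the cited theorem, which it does, the two axioms $(\in_A|\in_A)=1_{PA}$ and $1_B\le(\phi\setminus\in_A)(\in_A\setminus\phi)$ being exactly Freyd--Scedrov's.
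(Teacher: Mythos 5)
Your proposal is correct and follows essentially the same route as the paper: define $\in_A$ in $\mathsf{Span}_\sim(\T)$ as $Q(\in_A)$, transport the power-allegory structure along the division-preserving representation $Q$, and invoke the Freyd--Scedrov/Johnstone result (the paper cites \cite[Corollary 3.4.7]{john}) to conclude that the category of maps is a topos. The only difference is one of detail: you explicitly verify the two power-allegory axioms and the unitarity and tabularity of $\mathsf{Span}_\sim(\T)$, which the paper leaves implicit in its citation.
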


\proof
Let $\in_A:PA \rightarrow A$ in $\mathsf{Span}_\sim(\T)$ be $Q(\in_A:PA \rightarrow A)$. Since $Q((-)/[f,g]_\E) = (-)/[f,g]_\sim$ and $Q$ is a representation, $\mathsf{Span}_\sim(\T)$ is a power allegory. Then by \cite[Corollary 3.4.7]{john}, $\mathsf{Map}(\mathsf{Span}_\sim(\T))$ is a topos.
\endproof

Denoting \(\mathsf{Map}(Q)\) by \(\eta\):
\begin{corollary}\label{logical functor}
    $\eta:\T \longrightarrow \mathsf{Map}(\mathsf{Span}_\sim(\T))$ is a logical functor.
\end{corollary}


In the rest of this section, we present a different kind of compatible relation, generated by a class of \textit{endospans}, that can be considered as an extension of relations generated by classes of morphisms. Utilizing them, we can generate some logical relations.
An endospan, as depicted below, is a span in which its domain and codomain are the same.
\begin{center}
	$\xymatrix{&& D\ar[lld]_{p}\ar[rrd]^{q}&&\\
 	A&&&&A}$
\end{center}
If in the above endospan $p=q$ and $p$ is an isomorphism, it is called an endospan of an iso.

\begin{definition}
\begin{itemize}
\item A class of endospans is called \emph{saturated} if it contains all endospans of isos.
\item Suppose $\A$ is a saturated class of endospans. The compatible relation generated by $\A$, denoted by $\sim_\A$, is defined to be the smallest compatible relation $\sim$ on the category $\mathsf{Span}(\C)$ such that for all $(a,b)$ in $\A$, \((a,b)\sim (1,1)\).
\end{itemize}
\end{definition}
In the next  proposition, we  explain how this smallest relation is constructed and provide a concrete representation of it.
\begin{proposition}
For a saturated class of endospans, $\A$, the compatible relation generated by $\A$ is described as follows:

$(h,k)\sim (r,s) \iff$ there are decompositions
$(h,k)=(h_n,k_n)\cdots(h_1,k_1)$ and $(r,s)=(r_m,s_m)\cdots(r_1,s_1)$, and endospans
$(a_1,b_1),\cdots,(a_n,b_n)\in\A$ and $(c_1,d_1),\cdots,(c_m,d_m)\in\A$ such that:
\[
(r_m,s_m)(c_m,d_m)\cdots(r_1,s_1)(c_1,d_1)=(h_n,k_n)(a_n,b_n)\cdots(h_1,k_1)(a_1,b_1)
\]
\end{proposition}

\proof
Obvious.
\endproof

\begin{example}
\begin{itemize}
\item Let $\I$ be the class of all endospans of isos. The compatible relation generated by this class is defined as follows: $(f,g)\sim(h,k)$ if there is an isomorphism $\phi$ such that $f=h\phi$ and $g=k\phi$. So $\mathsf{Span}_{\sim}(\C)$ is the ordinary category of spans.
\item For a stable system of morphisms $\B$, we can form a saturated class of endospans containing $(b,b)$ for all $b\in \B$. The compatible relation generated by this class of endospans is equivalent to $\sim_\B$.
\item For a morphism $f:A\rightarrow B$, we can form a saturated endospan class by adding the kernel pair of $f$ to $\I$, the class of all endospans of isos.
\item For a morphism $f:A\rightarrow B$, a saturated class of endospans can be formed by adding the kernel pair of $f$ to the class of endospans containing $(e,e)$ for epimorphisms $e$.
\end{itemize}
\end{example}

\begin{definition}
For a morphism $f:A\rightarrow B$, we define $K(f)$ to be the saturated class of endospans containing the kernel pairs of all morphisms $h$ in which $f=gh$ for some morphism $g$, and $(e,e)$ for all epimorphisms $e$.
\end{definition}

The compatible relations generated by $K(f)$ imply $(p_1,p_2)\sim_{K(f)}(1,1)$, in which $p_1,p_2$ are obtained by the following pullback diagram, where $f=gh$ for some morphism $g$:
\[
\xymatrix{&&P\ar[lld]_{p_1}\ar[rrd]^{p_2}&&\\
	A\ar[rrd]_{h}&&&&A\ar[lld]^{h}\\
&&B}
\]

\begin{lemma}
Using the above definitions and notations, we have:
\begin{itemize}
\item[(a)] For an epimorphism $e$, $[1,e]_{K(e)}$ is an isomorphism and its inverse is $[e,1]_{K(e)}$.
\item[(b)] If $f=gh$, then $K(h)\subseteq K(f)$.
\end{itemize}
\end{lemma}

\proof
Obvious.
\endproof

The smallest logical relation containing $K(f)$ is denoted by $L(f)$.

\begin{lemma}\label{forall}
The following diagram is formed by pullbacks, in which $g$ is an epimorphism. Then $\forall_{g\times g}\langle q_1,q_2\rangle=\langle p_1,p_2\rangle$.
\[
\xymatrix{
Q\ar@/_1pc/[dd]_{q_1}\ar@/^1pc/[rr]^{q_2}\ar[r]_{v_2}\ar[d]^{v_1}&R\ar[r]^{r}\ar[d]_{r_2}&C\ar[d]^{g}\\
R\ar[r]^{r_1}\ar[d]_{r}&P\ar[r]^{p_2}\ar[d]_{p_1}&B\ar[d]^{f}\\
C\ar[r]_{g}&B\ar[r]_{f}&A}
\]
\end{lemma}

\proof
Let $(g\times g)^{-1}\langle x,y\rangle \le\langle q_1,q_2\rangle$. Then there is an arrow $i$ such that
$(g\times g)^{-1}\langle x,y\rangle=\langle q_1,q_2\rangle i$. Set $(g\times g)^{-1}\langle x,y\rangle=\langle x',y'\rangle$ and $\langle x,y\rangle^{-1}(g\times g)=e$.
Since $g$ is an epimorphism, $(g \times g)$ is also an epimorphism, and since epimorphisms are stable under pullbacks in a topos, it follows that $e$ is an epimorphism as well.

We have the following equalizer diagrams:
\begin{center}
$\xymatrix{P\ar[rr]^{\langle p_1,p_2\rangle}&&B\times B\ar@<1ex>[rr]^{f\pi_1}\ar@<-1ex>[rr]_{f\pi_2}&&A}$
\hfil
$\xymatrix{P\ar[rr]^{\langle q_1,q_2\rangle}&&C\times C\ar@<1ex>[rr]^{fg\pi'_1}\ar@<-1ex>[rr]_{fg\pi'_2}&&A}$
\end{center}

We have $fg\pi'_1=f\pi_1(g\times g)$ and $fg\pi'_2=f\pi_2(g\times g)$. So:
\[
\begin{array}{ll}
fxe &=f\pi_1\langle x,y\rangle e\\
&=f\pi_1(g\times g)\langle x',y'\rangle\\
&=fg\pi'_1\langle q_1,q_2\rangle i\\
&=fg\pi'_2\langle q_1,q_2\rangle i\\
&=f\pi_2(g\times g)\langle x',y'\rangle\\
&=f\pi_2\langle x,y\rangle e\\
&=fye
\end{array}
\]

Since $e$ is an epimorphism, $fx=fy$. Hence $\langle x,y\rangle\le\langle p_1,p_2\rangle$.
By the following pullback diagrams we get $(g\times g)^{-1}\langle p_1,p_2\rangle=\langle q_1,q_2\rangle$. Then, $\langle x,y\rangle\le\langle p_1,p_2\rangle$ implies $(g\times g)^{-1}\langle x,y\rangle \le\langle q_1,q_2\rangle$.

\begin{center}
$\xymatrix{Q\ar[rr]^{v_1}\ar[d]_{\langle q_1,q_2\rangle}&&R\ar[rr]^{r_1}\ar[d]_{\langle r,p_2r_1\rangle}&&P\ar[d]^{\langle p_1,p_2\rangle}\\
C\times C\ar[rr]_{1\times g} && C\times B\ar[rr]_{g\times 1}&&B\times B}$
\end{center}
\endproof

\begin{corollary}
Suppose $f = gh$ with $h$ an epimorphism. Then $L(g) \subseteq L(f)$.
\end{corollary}

\proof
Let $g=uv$ and consequently $f=uvh$. So the kernel pair of $v$ is related to $(1,1)$ by $L(g)$ and the kernel pair of $vh$ is related to $(1,1)$ by $L(f)$. Since the kernel pairs of $h$ and $vh$ are related by $L(f)$, by using $\forall_{h\times h}$ and Lemma \ref{forall}, the kernel pair of $v$ is related to $(1,1)$ by $L(f)$.
\endproof

\section{Booleanization of a topos}

In this section, we aim to associate a Boolean topos to each topos in a universal way. To achieve this, we introduce a class of morphisms called logical classes, which support certain logical operations. Using this, we construct a quotient of spans, yielding the associated Boolean topos.

\begin{definition}
Let $\T$ be a topos and let $\W$ be a class of morphisms in $\T$. We call $\W$ a \emph{logical class} if it satisfies the following conditions:
\begin{itemize}
	\item
	$\W$ is closed under composition, pullbacks, and contains all isomorphisms,
	\item
	$\E \subseteq \W$,
	\item
	for each $w \in \W$, its $\M$-part is also in $\W$,
	\item
	for any monomorphism $m \in \W$, and for any monomorphism $f$ and morphism $g$ in $\T$, the morphism $\forall_g m$ is also in $\W$:
	\[
	\xymatrix{
	\ar[r]^m & \ar[r]^f & \ar[d]^g \\
	\ar[r]_{\forall_g m} & \ar[r]_{\forall_g f} &
	}
	\]
\end{itemize}
\end{definition}

\begin{theorem}
If $\W$ is a logical class, then $\sim_\W$ is a logical relation.
\end{theorem}

\proof
It can be easily verified.
\endproof

In any topos, the morphism $b: 1+1 \rightarrow \Omega$ is a monomorphism. Our goal is to make this morphism an isomorphism. Let $\B(\T)$ be the smallest logical class containing $b:1+1 \rightarrow \Omega$.

\begin{theorem}
$\mathsf{Map(Span}_{\B(\T)}(\T))$ is a Boolean topos.
\end{theorem}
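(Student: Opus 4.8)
The plan is to check two things: that $\mathsf{Map}(\mathsf{Span}_{\B(\T)}(\T))$ is a topos, and that in it the canonical monomorphism $1+1\rightarrow\Omega$ is an isomorphism; by the standard characterization of booleanness (see \cite{macmor}) this gives the result. The first point is immediate: $\B(\T)$ is a logical class by construction, so by the theorem above $\sim_{\B(\T)}$ is a logical relation, whence, by the theorem that $\mathsf{Map}(\mathsf{Span}_{\sim}(\T))$ is a topos for every logical relation $\sim$, $\mathsf{Span}_{\B(\T)}(\T)$ is a power allegory and $\T':=\mathsf{Map}(\mathsf{Span}_{\B(\T)}(\T))$ is a topos; moreover $\eta=\mathbf Q:\T\rightarrow\T'$ is a logical functor by Theorem \ref{logical functor}.

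The substantive step is to show that $\eta(b)=[1,b]_{\B(\T)}:1+1\rightarrow\Omega$ is an isomorphism in $\T'$, with two-sided inverse $[b,1]_{\B(\T)}$. Since $b$ is (the graph of) a morphism of $\T$, $\eta(b)$ is a map, so it suffices to invert it in the allegory $\mathsf{Span}_{\B(\T)}(\T)$ — an invertible map in an allegory has its opposite as inverse, which is again a map. Evaluating the two allegory composites by the pullback formula for span composition: because $b$ is mono, the pullback of $b$ along $b$ is trivial, so $[b,1]_{\B(\T)}\circ[1,b]_{\B(\T)}=(1,1)$ already before quotienting (this composite is the kernel-pair endospan of $b$, which is the diagonal since $b$ is mono); on the other side, $[1,b]_{\B(\T)}\circ[b,1]_{\B(\T)}$ is the \emph{image} endospan $(b,b):\Omega\rightarrow\Omega$, and the construction of $\sim_{\B(\T)}$ — attached to the least logical class containing $b$, with $b$ already mono — forces $(b,b)\sim_{\B(\T)}(1,1)$. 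Hence $\eta(b)$ is invertible, which is precisely what it means to have made $b$ an isomorphism in a logical manner.

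To finish, recall that a logical functor preserves the terminal object, finite coproducts (logical functors preserve finite colimits, see \cite{macmor}), the subobject classifier, and the arrows $\mathrm{true},\mathrm{false}:1\rightarrow\Omega$; hence $\eta$ carries the canonical mono $1_{\T}+1_{\T}\rightarrow\Omega_{\T}$ to the canonical mono $1_{\T'}+1_{\T'}\rightarrow\Omega_{\T'}$, modulo the comparison isomorphisms $\eta(1+1)\cong 1+1$ and $\eta(\Omega)\cong\Omega$. Since $\eta(b)$ is an isomorphism, this canonical mono is an isomorphism, so the subobject classifier of $\T'$ is $1+1$ and $\T'=\mathsf{Map}(\mathsf{Span}_{\B(\T)}(\T))$ is a boolean topos.

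I expect the only real obstacle to lie in the middle paragraph: pinning down, from the precise description of $\sim_{\B(\T)}$, that the image endospan $(b,b)$ really is identified with $(1,1)$ and, more generally, that passing to the least logical class containing $b$ produces exactly the identifications needed to invert $[1,b]_{\B(\T)}$ without ever requiring morphisms outside $\B(\T)$; this is where the closure axioms of a logical class — especially stability of $\M$-parts and of the operation $\forall_g(-)$ on monomorphisms — have to be used. The remaining facts invoked (logical functors preserve finite coproducts and the canonical mono $1+1\rightarrow\Omega$, and a topos with $1+1\cong\Omega$ is boolean) are standard and routine.
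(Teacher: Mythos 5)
Your proposal is correct and follows essentially the same route as the paper: invert $[1,b]_{\B(\T)}$ in the allegory by computing that $[b,1][1,b]=[1,1]$ (since $b$ is mono) and $[1,b][b,1]=[b,b]\sim_{\B(\T)}(1,1)$, then use that $\eta$ is logical and cocartesian to identify $\eta(b)$ with the canonical mono $1+1\rightarrow\Omega$ in the quotient. The obstacle you flag at the end is not actually an issue: $(b,b)\sim_{\B(\T)}(1,1)$ is immediate from $b\in\B(\T)$ and the definition of the relation generated by a morphism class, with the closure axioms of a logical class needed only to guarantee that $\sim_{\B(\T)}$ is logical (hence that the quotient is a topos and $\eta$ is a logical functor).
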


\proof
Since $(b,b) \sim_{\B(\T)} (1,1)$, we have $[b,b]_{\B(\T)} = [1,1]_{\B(\T)}$. Thus, $[1,b]_{\B(\T)}$ is a retraction. Because $b$ is mono, we get $[b,1]_{\B(\T)}[1,b]_{\B(\T)} = 1$. Hence, $[1,b]$ is an isomorphism in $\mathsf{Span}_{\B(\T)}(\T)$, and therefore also in $\mathsf{Map(Span}_{\B(\T)}(\T))$.

By Theorem \ref{logical functor}, the functor $\eta:\T \to \mathsf{Map(Span}_{\B(\T)}(\T))$ is logical. By \cite[Corollary 2.2.10]{john}, $\eta$ is cocartesian. Thus,
\[
\eta(b:1+1 \rightarrow \Omega) = [1,b]:1+1 \rightarrow \Omega.
\]
So $\mathsf{Map(Span}_{\B(\T)}(\T))$ is a Boolean topos, as required.
\endproof

We now show that this construction is universal.

\begin{lemma}\label{FB(B"}
For any logical functor $F:\T \to \T'$, we have $F(\B(\T)) \subseteq \B(\T')$.
\end{lemma}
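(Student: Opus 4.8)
The plan is to exploit the minimality of $\B(\T)$: since $\B(\T)$ is defined as the least logical class containing $b:1+1\to\Omega$, it suffices to produce \emph{some} logical class $\W$ of morphisms of $\T$ with $b\in\W$ and $F(\W)\subseteq\B(\T')$, for then $\B(\T)\subseteq\W$ and hence $F(\B(\T))\subseteq F(\W)\subseteq\B(\T')$. The natural candidate is the preimage
\[
\W=\{\,w\ \text{in}\ \T:\ F(w)\in\B(\T')\,\},
\]
so the real content is checking that $\W$ so defined is a logical class and that $b\in\W$.

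First I would verify $b\in\W$. Because $F$ is a logical functor it is in particular cocartesian (by \cite[Corollary 2.2.10]{john}), so $F(1+1)=1+1$ in $\T'$, and $F$ preserves $\Omega$ and the classifying map of the top element, so $F(b:1+1\to\Omega)$ is (canonically identified with) the map $b':1+1\to\Omega'$ in $\T'$; since $b'\in\B(\T')$ by definition, $b\in\W$. Next I would run through the four clauses in the definition of a logical class for $\W$. Closure under composition, closure under pullbacks, and containment of isomorphisms all follow immediately from the fact that $F$ preserves composition, preserves pullbacks (logical functors are exact), preserves isomorphisms, and $\B(\T')$ itself satisfies these closure properties. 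For $\E\subseteq\W$: a logical functor preserves epimorphisms (it is exact), and $\E\subseteq\B(\T')$, so every epi of $\T$ lands in $\B(\T')$. For the $\M$-part clause: $F$ preserves the $(\E,\M)$-factorization (exactness again), so if $w=m\circ e$ is the factorization in $\T$ then $F(w)=F(m)\circ F(e)$ is the factorization in $\T'$ with $F(m)$ its $\M$-part; since $F(w)\in\B(\T')$ and $\B(\T')$ is logical, $F(m)\in\B(\T')$, i.e. $m\in\W$. Finally, for the $\forall_g$ clause: $F$ preserves the $\forall$ along a morphism (a logical functor preserves the interpretation of all the topos structure, in particular universal quantification, so $F(\forall_g m)=\forall_{F(g)}F(m)$, and $F$ sends the mono $f$ to a mono $F(f)$); thus if $m\in\W$ is a mono then $F(m)\in\B(\T')$ is a mono and $F(\forall_g m)=\forall_{F(g)}F(m)\in\B(\T')$ because $\B(\T')$ is logical, giving $\forall_g m\in\W$.

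Having checked that $\W$ is a logical class containing $b$, minimality of $\B(\T)$ gives $\B(\T)\subseteq\W$, which is exactly the statement $F(\B(\T))\subseteq\B(\T')$.

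The main obstacle I anticipate is purely bookkeeping rather than conceptual: one must be careful that a logical functor really does preserve each piece of structure named in the definition of a logical class — not just finite limits and $\Omega$, but the $(\E,\M)$-factorization and the $\forall$-operation — and that the canonical comparison isomorphisms ($F(1+1)\cong 1+1$, $F(\Omega)\cong\Omega'$, $F(\forall_g m)\cong\forall_{F(g)}F(m)$) are compatible in the way the argument uses them. All of these are standard facts about logical functors, so once they are invoked the verification of the four clauses is routine.
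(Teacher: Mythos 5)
Your argument is exactly the paper's: both form the preimage class $F^{-1}(\B(\T'))$, check it is a logical class containing $b$ using the fact that a logical functor preserves epis, monos, pullbacks, the $(\E,\M)$-factorization and $\forall$, and conclude by minimality of $\B(\T)$. Your write-up simply spells out the clause-by-clause verification that the paper leaves as ``one can easily verify.''
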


\proof
Since $F$ preserves epis, monos, and $\forall$, one can easily check that $F^{-1}(\B(\T'))$ is a logical class. Because $F(b) = b'$, we have $b \in F^{-1}(\B(\T'))$, and thus $\B(\T) \subseteq F^{-1}(\B(\T'))$. Therefore, $F(\B(\T)) \subseteq \B(\T')$.
\endproof

\begin{theorem}\label{logical functor gives logical functor}
Let $F:\T \to \T'$ be a logical functor.
\begin{itemize}
	\item[(a)] The map $PF:\mathsf{Span}_{\B(\T)}(\T) \to \mathsf{Span}_{\B(\T')}(\T')$ defined by $[f,g]_{\B(\T)} \mapsto [Ff,Fg]_{\B(\T')}$ is a representation of allegories.
	\item[(b)] $\mathsf{Map}(PF): \mathsf{Map(Span}_{\B(\T)}(\T)) \to \mathsf{Map(Span}_{\B(\T')}(\T'))$ is a logical functor.
\end{itemize}
\end{theorem}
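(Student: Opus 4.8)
The plan is to realize $PF$ as a factorization of the levelwise span functor through the two booleanization quotients, and then to read off the structure $PF$ preserves from the way the (power‑)allegory structure on $\mathsf{Span}_{\B(-)}(-)$ was produced, namely transported from $\mathsf{Span}_\E(-)\cong\mathsf{Rel}(-)$. First, for (a), I would treat well‑definedness. Since $F$ is logical it preserves finite limits, in particular pullbacks, so $(s,f)\mapsto(Fs,Ff)$ is a functor $\mathsf{Span}(F)\colon\mathsf{Span}(\T)\to\mathsf{Span}(\T')$ preserving domains, codomains, vertical isomorphisms and horizontal composition. Post‑composing $\mathsf{Span}(F)$ with the booleanization quotient $\mathsf{Span}(\T')\to\mathsf{Span}_{\B(\T')}(\T')$ yields a functor whose kernel is a compatible equivalence relation on $\mathsf{Span}(\T)$, and by Lemma~\ref{FB(B"} this functor sends every endospan lying in $\B(\T)$ to $[1,1]_{\B(\T')}$. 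As $\sim_{\B(\T)}$ is the smallest compatible relation collapsing $\B(\T)$, it is contained in that kernel; hence the functor factors through $\mathsf{Span}(\T)\to\mathsf{Span}_{\B(\T)}(\T)$, and the factor is exactly $PF$, so $PF$ is a well‑defined functor with $PF[f,g]_{\B(\T)}=[Ff,Fg]_{\B(\T')}$.

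Still within (a), $PF$ preserves $(-)^{\circ}$ because $[s,f]^{\circ}=[f,s]$ on the nose and $\mathsf{Span}(F)$ commutes with swapping legs. For $\cap$, I would use that every morphism of $\mathsf{Span}_{\B(\T)}(\T)$ is the image under the full representation $Q\colon\mathsf{Span}_\E(\T)\to\mathsf{Span}_{\B(\T)}(\T)$ of a morphism of $\mathsf{Span}_\E(\T)\cong\mathsf{Rel}(\T)$, where $\cap$ is intersection of subobjects of $A\times B$, i.e. a pullback of monos. A logical functor preserves monos, $(\E,\M)$‑factorizations and pullbacks, so the induced functor $\mathsf{Rel}(\T)\to\mathsf{Rel}(\T')$, $[s,f]_\E\mapsto[Fs,Ff]_\E$, preserves $\cap$; since the square formed by this functor, the two copies of $Q$, and $PF$ commutes and $Q$ is full, $PF$ preserves $\cap$ as well. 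This gives (a).

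For (b): a representation of allegories preserves composition, identities, $^{\circ}$ and hence the local order, so it carries maps to maps (a morphism $\phi$ is a map iff $1\le\phi^{\circ}\phi$ and $\phi\phi^{\circ}\le 1$); therefore $\mathsf{Map}(PF)$ is a well‑defined functor between the toposes $\mathsf{Map(Span}_{\B(\T)}(\T))$ and $\mathsf{Map(Span}_{\B(\T')}(\T'))$. To upgrade this to \emph{logical}, I would strengthen (a) to the statement that $PF$ is a representation of power allegories. For division, the formula $[h,k]_\E/[f,g]_\E=[\pi_1a,\pi_2a]_\E$ with $a=\forall_{g\times 1}(f\times 1)^{*}m_{\langle h,k\rangle}$, together with the fact that a logical functor preserves $\M$‑parts, pullbacks and $\forall$, shows the induced functor on $\mathsf{Rel}$ commutes with $/$; transporting along the $Q$'s, which carry $/$ to $/$ by the earlier theorem that $Q((-)/[f,g]_\E)=(-)/[f,g]_\sim$, gives $PF((-)/[f,g]_{\B(\T)})=(-)/PF[f,g]_{\B(\T)}$. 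For the membership morphisms, $\in_A$ in $\mathsf{Span}_{\B(\T)}(\T)$ is $Q$ of the relation $\in_A$ of $\mathsf{Rel}(\T)$, and a logical functor sends $PA$ to $P(FA)$ and the membership relation of $A$ to that of $FA$, so $PF(\in_A)=\in_{FA}$. Hence $PF$ is a representation of power allegories, and by the power‑allegory/topos correspondence of \cite{john} (the source of \cite[Corollary~3.4.7]{john} invoked above) $\mathsf{Map}(PF)$ is a logical functor; by construction it moreover satisfies $\mathsf{Map}(PF)\circ\eta=\eta'\circ F$ with $\eta,\eta'$ the logical functors of Theorem~\ref{logical functor}.

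The main obstacle is not the allegorical bookkeeping but the two places where the hypotheses on $F$ are genuinely used: the well‑definedness of $PF$, which rests entirely on Lemma~\ref{FB(B"}, and the verification that $PF$ respects $/$ and the $\in_A$'s — that is, that it is a representation of \emph{power} allegories rather than merely of allegories — since that is precisely what the correspondence of \cite{john} requires in order to produce a logical functor instead of only a regular one.
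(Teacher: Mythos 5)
Your proposal is correct and follows essentially the same route as the paper's (much terser) proof: well-definedness of $PF$ from Lemma~\ref{FB(B"}, preservation of the allegory structure from $F$ being logical (in particular preserving pullbacks and $(\E,\M)$-factorizations), and clause (b) from the definition of $\in_A$ in the two span categories. Your write-up simply supplies the details — the factorization through the quotients, the preservation of $/$ and of the membership relations — that the paper leaves implicit.
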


\proof
For (a), Lemma \ref{FB(B"} ensures the map is well-defined. The rest follows from the fact that $F$ preserves pullbacks. For (b), the result follows from the definition of $\in_A$ in both allegories.
\endproof

Let $\mathsf{BoolTop}$ denote the category whose objects are Boolean toposes and whose morphisms are logical functors. This forms a subcategory of the category $\mathsf{Top}$ of toposes and logical functors. Using Theorem \ref{logical functor gives logical functor}, we define the functor
\[
\mathbf{Bool}:\mathsf{Top} \to \mathsf{BoolTop}
\]
where $\mathbf{Bool}(F)$ and $\mathbf{Bool}(\T)$ denote $\mathsf{Map}(PF)$ and $\mathsf{Map(Span}_{\B(\T)}(\T))$, respectively.

\begin{theorem}
$\mathsf{BoolTop}$ is a reflective subcategory of $\mathsf{Top}$.
\end{theorem}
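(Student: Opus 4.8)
The plan is to show that the functor $Bool:\mathsf{Top}\to\mathsf{BoolTop}$ is left adjoint to the (evidently full) inclusion $\iota:\mathsf{BoolTop}\hookrightarrow\mathsf{Top}$, with the logical functors $\eta_\T:\T\to Bool(\T)=\mathsf{Map(Span}_{\B(\T)}(\T))$ of Theorem \ref{logical functor} as unit. Since $\iota$ is full, this amounts to proving that for every topos $\T$, every Boolean topos $\mathcal B$ and every logical functor $F:\T\to\mathcal B$, there is a \emph{unique} logical functor $\bar F:Bool(\T)\to\mathcal B$ with $\bar F\circ\eta_\T=F$.

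I would first record two preliminaries. (1) The family $\eta$ is natural: for a logical functor $F:\T\to\T'$ one checks directly, from the descriptions $Bool(F)=\mathsf{Map}(PF)$ and $\eta=\mathsf{Q}=[1,-]$, that $Bool(F)\circ\eta_\T=\eta_{\T'}\circ F$, both acting as $F$ on objects and sending a morphism $f$ to $[1,Ff]_{\B(\T')}$. (2) If $\mathcal B$ is Boolean then $\eta_{\mathcal B}$ is an isomorphism of categories. Indeed, in a Boolean topos $b:1+1\to\Omega$ is an isomorphism, hence an epimorphism, so $b$ lies in the class $\E$ of all epimorphisms; and $\E$ is itself a logical class (closure under composition and pullback is stability of epis; the $\M$-part of an epi is an iso, being epi and mono in a topos; and $\forall_g$ of the top subobject is the top subobject). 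Hence the least logical class containing $b$ is $\E$, so $Bool(\mathcal B)=\mathsf{Map(Span}_\E(\mathcal B))\cong\mathsf{Map}(\mathsf{Rel}(\mathcal B))\cong\mathcal B$, and under this identification $\eta_{\mathcal B}$ is the canonical isomorphism sending a morphism to its graph.

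Existence of the factorization is then formal: given logical $F:\T\to\mathcal B$ with $\mathcal B$ Boolean, set $\bar F:=\eta_{\mathcal B}^{-1}\circ Bool(F)$. This is logical, since $Bool(F)$ is logical by Theorem \ref{logical functor gives logical functor}(b) and $\eta_{\mathcal B}^{-1}$ is the inverse of a logical isomorphism of toposes; and by naturality $\bar F\circ\eta_\T=\eta_{\mathcal B}^{-1}\circ Bool(F)\circ\eta_\T=\eta_{\mathcal B}^{-1}\circ\eta_{\mathcal B}\circ F=F$.

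It remains to prove uniqueness, which is the main obstacle. Let $G:Bool(\T)\to\mathcal B$ be logical with $G\circ\eta_\T=F$; I must show $G=\bar F$. On objects this is clear, $\eta_\T$ being the identity on objects. For morphisms I would pass to allegories via the Freyd--Scedrov correspondence (cf. \cite{freyd,john}): a logical functor between the toposes of maps of two power allegories is $\mathsf{Map}$ of a unique representation, so $G$ and $\bar F$ arise from representations $\widehat G,\widehat{\bar F}:\mathsf{Span}_{\B(\T)}(\T)\to\mathsf{Rel}(\mathcal B)$. Now in $\mathsf{Rel}(\T)\cong\mathsf{Span}_\E(\T)$ every relation tabulates as $\Gamma_g\circ\Gamma_e^{\circ}$ for $\T$-morphisms $e,g$, and the quotient representation $\mathsf{Q}:\mathsf{Span}_\E(\T)\to\mathsf{Span}_{\B(\T)}(\T)$ is surjective on morphisms and carries $\Gamma_h$ to $\eta_\T(h)$; hence every morphism of the allegory $\mathsf{Span}_{\B(\T)}(\T)$ has the form $\eta_\T(g)\circ\eta_\T(e)^{\circ}$. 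Since $\widehat G$ and $\widehat{\bar F}$ agree on every $\eta_\T(f)$ — both send it to the graph of $F(f)=G(\eta_\T(f))$ — and both preserve reciprocals and composition, they agree on all morphisms; therefore $G=\mathsf{Map}(\widehat G)=\mathsf{Map}(\widehat{\bar F})=\bar F$. Assembling the pieces, $\eta_\T$ is a universal arrow and $\mathsf{BoolTop}$ is reflective in $\mathsf{Top}$. The delicate points are the precise form of the ``logical functor $\leftrightarrow$ representation of power allegory'' equivalence and the verification that morphisms of $\mathsf{Span}_{\B(\T)}(\T)$ are generated, as an allegory, by the image of $\eta_\T$ together with reciprocals.
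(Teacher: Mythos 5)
Your overall strategy coincides with the paper's: exhibit $\eta_\T:\T\to Bool(\T)$ as a universal arrow to the inclusion, using $\B(\mathcal B)=\E$ (hence $Bool(\mathcal B)\cong\mathcal B$) for a Boolean topos $\mathcal B$ and the naturality square $Bool(F)\circ\eta_\T=\eta_{\mathcal B}\circ F$ to get existence of the factorization; this part matches the paper essentially step for step, and your explicit check that $\E$ is itself a logical class is a welcome expansion of the paper's ``it is easy to verify.'' Where you genuinely diverge is uniqueness. The paper argues directly inside $\mathsf{Span}_{\B(\T)}(\T)$: a map $[f,g]$ factors as $[1,g][f,1]$ with $[f,1]$ invertible and inverse $[1,f]=\eta_\T(f)$, so for any logical $G$ with $G\eta_\T=F$ one computes $G[f,g]=G\eta_\T(g)\,(G\eta_\T(f))^{-1}=F(g)F(f)^{-1}=Bool(F)[f,g]$ --- a two-line calculation, though it silently uses that $[f,1]$ is invertible for a map $[f,g]$, which deserves a word of justification. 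You instead lift $G$ to a representation of power allegories and use that every morphism of $\mathsf{Span}_{\B(\T)}(\T)$ is $\eta_\T(g)\circ\eta_\T(e)^{\circ}$ (which is just the span identity $[e,g]=[1,g][e,1]$), so that a representation is determined by its values on graphs. Your route avoids the invertibility claim entirely but imports the ``logical functor $\leftrightarrow$ representation of tabular power allegories'' correspondence from \cite{freyd} and \cite{john}, a substantially heavier (if standard) piece of machinery that you rightly flag as the delicate point and that the paper's elementary computation makes unnecessary; conversely, the paper's argument is shorter but rests on an unproved assertion about maps in the quotient allegory. Both proofs are sound modulo these respective gaps, and either could be completed; if you keep your version, you should state precisely which form of the allegory--topos equivalence you invoke and verify that $\mathsf{Span}_{\B(\T)}(\T)$ is tabular so that it applies.
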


\proof
We show that the functor $\mathbf{Bool}$ is left adjoint to the inclusion functor $\iota: \mathsf{BoolTop} \to \mathsf{Top}$. It suffices to show that
\[
\eta: \T \to \mathsf{Map(Span}_{\B(\T)}(\T)) = \iota \cdot \mathbf{Bool}(\T)
\]
is universal.

Let $F: \T \to \T' = \iota(\T')$ be a logical functor. Since $F$ is cocartesian and $\T'$ is a Boolean topos, $F(b)$ is an isomorphism. Theorem \ref{logical functor gives logical functor} yields the functor $\mathsf{Bool}(F): \mathsf{Bool}(\T) \to \mathsf{Bool}(\T')$. It is easy to check that $\B(\T') = \E$, hence $\mathsf{Bool}(\T') = \T'$. So we have the commutative triangle:
\[
\xymatrix{
\T \ar[rrrr]^{\eta} \ar[d]_F &&&& \mathsf{Map(Span}_{\B(\T)}(\T)) = \iota \cdot \mathsf{Bool}(\T) \ar[lllld]^{\mathsf{Bool}(F)} \\
\T' &&&&
}
\]

For uniqueness, let $[f,g]_{\B(\T)}$ be a map in $\mathsf{Span}_{\B(\T)}(\T)$. Then $[f,1]_{\B(\T)}$ is an isomorphism with inverse $[1,f]_{\B(\T)}$. For any functor $G$ such that $G \circ \eta = F$, we compute:
\[
\begin{array}{ll}
G[f,g]_{\B(\T)} &= G[1,g]_{\B(\T)} \cdot G[f,1]_{\B(\T)} \\
&= G[1,g]_{\B(\T)} \cdot (G[1,f]_{\B(\T)})^{-1} \\
&= G\eta(g) \cdot (G\eta(f))^{-1} \\
&= F(g) \cdot F(f)^{-1} \\
&= \mathsf{Bool}(F)[f,g]_{\B(\T)}.
\end{array}
\]
This completes the proof.
\endproof

\refs
\bibitem [Freyd, 1990]{freyd} P. J. Freyd, and A. Scedrov, Categories, allegories. North-Holland Mathematical Library, 39. North-Holland Publishing Co., Amsterdam, 1990. xviii+296 pp. ISBN: 0-444-70368-3; 0-444-70367-5.

\bibitem [Hosseini, 2020]{hoshitho} S. N. Hosseini, A. R. Shir Ali Nasab, and W. Tholen, Fraction, restriction, and range categories from stable systems of morphisms. J. Pure Appl. Algebra 224 (2020), no. 9, 106361, 28 pp.

\bibitem [Hosseini, 2022]{hsty}  S. N. Hosseini, A. R. Shir Ali Nasab, W. Tholen, and L. Yeganeh, Quotients of span categories that are allegories and the representation of regular categories.  Appl Categor Struct 30, 1177–1201 (2022). https://doi.org/10.1007/s10485-022-09687-9

\bibitem [Johnstone, 2002]{john} P. T. Johnstone, Sketches of an elephant: a topos theory compendium. Vol. 1. Oxford Logic Guides, 43. The Clarendon Press, Oxford University Press, New York, 2002. xxii+468+71 pp. ISBN: 0-19-853425-6

\bibitem [Lambek, 1986]{lambscot} J. Lambek, and P. J. Scott, Introduction to higher order categorical logic. Cambridge Studies in Advanced Mathematics, 7. Cambridge University Press, Cambridge, 1986. x+293 pp. ISBN: 0-521-24665-2

\bibitem [Mac Lane, 1992]{macmor} S. Mac Lane, and I. Moerdijk, Sheaves in geometry and logic. A first introduction to topos theory. Corrected reprint of the 1992 edition. Universitext. Springer-Verlag, New York, 1994. xii+629 pp. ISBN: 0-387-97710-4.

\endrefs

\end{document}